\title{Minimal entropy for uniform lattices in $\left(\PSL_2(\R)\right)^n$}
\author{Louis Merlin}
\begin{document}

\maketitle

\begin{abstract}
We prove that, among metrics on a compact quotient of $\mathbb{H}^2 \times\cdots\times \mathbb{H}^2$ (product of hyperbolic planes) of prescribed total volume, the product of hyperbolic metrics has minimal volume entropy.
\end{abstract}


\section{Introduction}

Let $(X,g)$ be a compact Riemannian $n$-manifold and $\widetilde{X}$ be the universal Riemannian cover of $X$. The volume entropy is defined as
\[h(g)=\lim_{R \to \infty}{\frac{1}{R}\log(\Vol(B(x,R)))}\]
where $B(x,R)$ is the ball of radius $R$ in $\widetilde{X}$ centered at any point $x\in\widetilde{X}$. The limit exists and is independent of the choice of $x$ (see \cite{manning} p.568).

In this paper, we are interested in the following problem.

\begin{quest} \label{question}
Let $M$ be a compact locally symmetric space of noncompact type with locally symmetric metric $g_0$. Let $g$ be any other metric on $M$ such that $\Vol(M,g_0)=\Vol(M,g)$. Do we have
\[h(g) \geqslant h(g_0)?\]
\end{quest}

M. Gromov was the first to conjecture such a result in \cite{gromovfilling}. He was only interested in the real hyperbolic case. But the question still makes sense for a general symmetric space.\\

In the case where $\widetilde{M}$ is reducible, there exist a unique locally symmetric metric of minimal entropy among locally symmetric metrics of prescribed volume, which are obtained by scaling the metric in the factors (\cite{connellfarbmer} Chapter 2). This metric is called "the" locally symmetric metric and is denoted by $g_0$.\\

In this work, we give a positive answer to the previous question in the case of compact quotients of products of $n$ hyperbolic planes, that is $M=\Gamma\backslash\left(\mathbb{H}^2\right)^n$ where $\Gamma$ is a uniform lattice in $\left(\PSL_2(\R)\right)^n$. More precisely our main result is the following:

\begin{thm}[Main theorem] \label{main}
Assume that $(M,g_0)$ is a compact quotient of the product of $n$ hyperbolic planes. Then, for any other metric $g$ on $M$,
\[h^{2n}(g)\Vol(M,g) \geqslant h^{2n}(g_0)\Vol(M,g_0).\]
\end{thm}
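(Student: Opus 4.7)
The plan is to adapt the barycenter method of Besson--Courtois--Gallot, in the variant developed by Connell--Farb to handle reducible target symmetric spaces. The argument has three parts: construction of a natural map, a sharp pointwise Jacobian estimate, and a volume comparison.

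First I would construct the natural map. Fix base points $y_0 \in \widetilde{M}$ and $x_0 \in X := (\mathbb{H}^2)^n$; the group $\Gamma = \pi_1(M)$ acts on both spaces, on $X$ via the given uniform lattice embedding. For each parameter $c > h(g)$ and each $y \in \widetilde{M}$, form the Patterson--Sullivan-type measure
\[
\mu_y^c \;=\; \sum_{\gamma \in \Gamma} e^{-c\, d_g(y,\gamma y_0)}\, \delta_{\gamma x_0}
\]
on $X$, normalise it, and push forward to a probability measure $\nu_y^c$ on the Furstenberg boundary $\partial_F X = (S^1)^n$ by the standard weak-limit procedure. Because each factor Busemann function is strictly convex transverse to its gradient, the barycenter of $\nu_y^c$ with respect to the sum of the $n$ factor Busemann functions is uniquely defined; this yields a smooth $\Gamma$-equivariant map $y \mapsto F_c(y) \in X$, which descends to $F_c : M \to M$, homotopic to the identity and hence of degree one.

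The heart of the proof is the pointwise Jacobian estimate
\[
\bigl|\det d F_c(y)\bigr| \;\leq\; \Bigl(\tfrac{c}{h(g_0)}\Bigr)^{2n}.
\]
Differentiating the implicit barycenter equation produces, for each $i = 1, \ldots, n$, symmetric positive semidefinite forms $H_i(y)$ and $K_i(y)$ on the tangent space of the $i$-th hyperbolic factor at $F_c(y)$: the first is the $\nu_y^c$-average of the Hessian of the $i$-th Busemann function, the second the average of the outer square of its gradient. The identity $\operatorname{Hess} B_\xi = g - db_\xi \otimes db_\xi$ on $\mathbb{H}^2$ gives $H_i + K_i = m_i(y)\, \mathrm{Id}$, where $m_i(y)$ is the mass of the $i$-th marginal of $\nu_y^c$ and $\sum_i m_i(y) = 1$. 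Combined with the BCG-type linear-algebraic inequality relating determinants to traces of such pairs and summed over the $n$ factors with the coupling imposed by the full measure $\nu_y^c$, this should yield the sharp exponent $2n$ and the constant $h(g_0)^{-2n} = n^{-n}$ (after normalising each factor to curvature $-1$, so that $h(g_0) = \sqrt{n}$).

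The volume comparison then closes the argument. Since $F_c$ has degree one,
\[
\Vol(M,g_0) \;\leq\; \int_M \bigl|\det d F_c\bigr|\, d\mathrm{vol}_g \;\leq\; \Bigl(\tfrac{c}{h(g_0)}\Bigr)^{2n}\Vol(M,g),
\]
and letting $c \searrow h(g)$ gives $h(g)^{2n}\Vol(M,g) \geq h(g_0)^{2n}\Vol(M,g_0)$. The main obstacle I anticipate is the sharp linear-algebra step in the Jacobian estimate: when $\Gamma$ is irreducible in $(\PSL_2(\mathbb{R}))^n$, the measure $\nu_y^c$ is not a product, and the coupling between the $n$ factors in $d F_c(y)$ must be controlled without losing the sharp constant. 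The feature specific to $\PSL_2$-factors that should rescue the argument is that each boundary $S^1$ is one-dimensional, so each $K_i$ is intrinsically rank one; the optimisation then reduces to a scalar Cauchy--Schwarz per factor, clean enough to match the sharp constant coming from the symmetric space.
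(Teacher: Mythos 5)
Your proposal is the barycenter method of Besson--Courtois--Gallot in the Connell--Farb product variant, and this is precisely the approach that is known to fail for the case at hand; the paper does something entirely different. Both \cite{bcg2} and \cite{connellfarbmer} explicitly exclude $\mathbb{H}^2$-factors, and the reason sits exactly at the step you flag as "the main obstacle": the pointwise Jacobian estimate $\abs{\det dF_c} \leqslant (c/h(g_0))^{2n}$. In the BCG scheme this estimate reduces to showing that the functional $H \mapsto \det(H)\,\det(\mathrm{Id}-H)^{-1/2}$ (or its appropriate product analogue), restricted to positive semidefinite endomorphisms of trace one of each factor's tangent space, attains its maximum at the scalar endomorphism $\mathrm{Id}/\dim$. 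This is the content of Appendix B of \cite{bcg2} and it holds only when each factor has dimension at least $3$. When a factor is $\mathbb{H}^2$ the functional is \emph{not} maximized at $\mathrm{Id}/2$; the supremum is approached as $H$ degenerates to rank one, i.e.\ as the boundary measure $\nu_y^c$ concentrates near a single point of the circle. Your proposed rescue --- that each $K_i$ is "intrinsically rank one" so the optimisation becomes a scalar Cauchy--Schwarz --- is therefore pointing at the pathology, not the cure: the rank-one degeneration is exactly the configuration on which the sharp constant is lost. No refinement of the linear algebra is known that recovers the sharp constant for $2$-dimensional factors, which is why this case remained open after \cite{connellfarbmer}.

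The paper instead embeds $(\mathbb{H}^2)^n$ into the unit sphere of $\Le^2(\mathbb{T}^n)$ by products of square roots of Poisson kernels and proves that this embedding realizes the spherical volume by exhibiting a calibrating $2n$-form. The form is built from the bounded cocycle on $(\mathbb{S}^1)^n$ obtained by alternating the cup product of $n$ Euler classes (following Bucher for $n=2$), and the sharp comass bound is obtained by Fourier analysis on the torus: one classifies the nonvanishing Fourier coefficients of the cocycle, shows each is dominated by the coefficient realized on the tangent space of the Poisson embedding, and concludes by Cauchy--Schwarz on the Fourier coefficients of an orthonormal $2n$-frame. If you want to salvage your write-up, the only reusable piece is the soft upper bound $\SphereVol(M) \leqslant \left(h(g)^2/8n\right)^n \Vol(M,g)$, which in the paper plays the role of your volume comparison and is proved with the same smoothed family $\Phi_c$, $c > h(g)$; the lower bound must go through calibration, not through a natural map.
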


Let us remark that the above inequality is sharp and no assumption is made on the metric $g$.\\

According to \cite{eberlein} p.260, there exist in PSL$_2(\mathbb{R}) \times $PSL$_2(\mathbb{R})$ two types of uniform lattices: virtual products and irreducible lattices.
Then our main theorem applies for the compact products of hyperbolic surfaces or Riemannian manifolds finitely covered by such a product but also for quotients of $\left(\mathbb{H}^2\right)^n$ which are far from being products. Both compact examples do exist (see \cite{shimizu} chapter 6 or \cite{borel} for arithmetic examples).\\

In the fundamental paper \cite{bcg2}, G. Besson, G. Courtois and S. Gallot dealt with the case where $M$ is a locally symmetric space of rank one ($g_0$ is negatively curved) and obtained a similar statement than the main theorem for such spaces. The same result was obtained before (see \cite{bcg1}) in any rank but for a metric $g$ in the conformal class of the locally symmetric metric $g_0$ (supposed to be irreducible). In the case where $\dim M \geqslant 3$, the method is based on the barycenter map and the inequality in theorem \ref{main} appears as an inequality of calibration (to be described below). In the case where $\dim M=2$, already proved by Katok (see \cite{katokconformal}), one can still ask if the inequality can be seen as an inequality of calibration. Besson, Courtois and Gallot showed that it is indeed true and gave another proof of the conjecture for the hyperbolic surfaces.

The barycenter method was improved by Connell and Farb in \cite{connellfarbmer} and, working factor by factor, they also gave a positive answer to the question \ref{question} in the case where $M$ is locally a product of rank one symmetric spaces with no factor $\mathbb{H}^2$. In fact in that case, there exist a unique locally symmetric metric (when one scales the metric in each factor) of minimal entropy and with volume one.

In both of those papers, the authors pointed out that the case of products of hyperbolic surfaces still remained unknown.

Note also that there is an answer to question \ref{question} in the same setting as \cite{connellfarbmer} (products of rank $1$ symmetric spaces without $\mathbb{H}^2$-factors) in \cite{bcgmilnorwood} using an interesting different point of view. The inequality between volumes and entropies appears as a corollary of a general work on representations of fundamental groups of compact manifolds into Lie groups of noncompact types.\\

We now describe the content of this paper. To prove our main result, we use the general outline introduced in \cite{bcg2}. It consists on an application of a method of calibration. To make this method efficient, we embed the universal cover $\mathbb{H}^2 \times\cdots\times \mathbb{H}^2$ in the unit sphere of $\Le^{2}$-functions of the Furstenberg boundary $\mathbb{T}^n=\mathbb{S}^1 \times\cdots\times \mathbb{S}^1$ by the products of Poisson kernels. The aim is to show that this embedding has minimal volume. In order to detect this minimality property, one may use a differential $2n$-form taking its extremal values over orthonormal frames in tangent frames of the embedding. The method is briefly recalled in sections \ref{spherevol} and \ref{calibration}.

The hardest part is to find the calibrating form. Apart from the barycenter map, which is not efficient in the $2$-dimensional case, Besson, Courtois and Gallot developed an alternative idea.
In Chapter 3 of \cite{bcg1}, following Gromov \cite{gromovvabc}, there is a general process to build suitable differential forms using bounded cocycles. The choice of the appropriate cocycle in $\mathbb{H}^2$ is then discussed in \cite{bcg2} Chapter 6.
We generalize this approach for the compact quotients of $\left(\mathbb{H}^2\right)^n$ using the bounded $2n$-cocycle on $\mathbb{T}^n$ that M. Bucher exploits in \cite{bucherh22} (for $n=2$). We describe this bounded cocycle in paragraph \ref{form} and we check that the derived differential form has the required properties afterwards. The calibrating inequality is finally obtained in paragraph \ref{inequality}.

The last section is devoted to applications. We obtain a (non optimal) estimate for the minimal volume of a compact quotient $\Gamma\backslash\left(\mathbb{H}^2\right)^n$. The most spectacular application is that we are able to give an optimal bound for degrees of maps
$f : Y^{2n}\longrightarrow \Gamma\backslash\left(\mathbb{H}^2\right)^n$ from any Riemannian $2n$-manifold. Precisely,

\begin{cor}
Let $Y$ be a smooth manifold of dimension $2n$ endowed with a Riemannian metric $g$ and let $f$ be a continuous map
\[f : (Y,g) \longrightarrow (M,g_0)\]
Then
\[h(g)^{2n} \Vol(Y,g) \geqslant \abs{\textit{deg }f} h(g_0)^{2n} \Vol(M,g_0)\]
\end{cor}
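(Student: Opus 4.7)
The strategy is to adapt the calibration argument of Theorem~\ref{main} by pulling back the calibrating form through $f$ and then re-running the pointwise comass estimate with the entropy of $(Y,g)$ in place of $h(g_0)$.

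Let $\omega$ denote the closed $2n$-form on $M$ constructed in paragraph~\ref{form} from Bucher's bounded $2n$-cocycle. From the proof of Theorem~\ref{main} one has the calibrating equality
\[\int_M \omega \;=\; h(g_0)^{2n}\,\Vol(M,g_0).\]
Since $\omega$ is a closed top-degree form on a closed oriented manifold, after a smooth approximation of $f$ the degree formula gives
\[\int_Y f^*\omega \;=\; (\deg f)\int_M \omega \;=\; (\deg f)\,h(g_0)^{2n}\,\Vol(M,g_0).\]

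The technical content of the corollary lies in the pointwise comass bound
\[\bigl|(f^*\omega)_y(v_1,\ldots,v_{2n})\bigr|\;\leqslant\;h(g)^{2n}\]
for every $g$-orthonormal frame $(v_1,\ldots,v_{2n})$ at $y\in Y$. I would obtain it by mimicking paragraph~\ref{inequality} with $(Y,g)$ in place of $(M,g_0)$. Recall that $\omega$ was constructed as $\Phi^*\hat\omega$ for the Poisson embedding $\Phi:\widetilde M\to \mathcal{S}^\infty(L^2(\mathbb{T}^n))$ and a universal form $\hat\omega$ on the sphere. Fix $s>h(g)$, build Patterson--Sullivan-type densities of exponent $s$ on the horofunction boundary of $\widetilde Y$ from the Poincar\'e series $\sum_{\gamma\in\pi_1(Y)} e^{-s\,d(y,\gamma y')}$, push them forward to $\mathbb{T}^n$ via the boundary extension of the equivariant lift $\tilde f:\widetilde Y\to\widetilde M$, and take $L^2$-square roots to produce an equivariant map $\Psi_s:\widetilde Y\to \mathcal{S}^\infty(L^2(\mathbb{T}^n))$. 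Since $\mathcal{S}^\infty$ is contractible, $\Psi_s$ is equivariantly homotopic to $\Phi\circ\tilde f$, so $\Psi_s^*\hat\omega$ and $\tilde f^*\omega$ represent the same cohomology class on $Y$ and hence have equal integrals.

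Applying the linear-algebraic inequality of paragraph~\ref{inequality} to $\Psi_s$ in place of $\Phi$ gives $\bigl|\Psi_s^*\hat\omega(v_1,\ldots,v_{2n})\bigr|\leqslant s^{2n}$ on every $g$-orthonormal frame, whence
\[|\deg f|\,h(g_0)^{2n}\,\Vol(M,g_0)\;=\;\Bigl|\int_Y f^*\omega\Bigr|\;=\;\Bigl|\int_Y \Psi_s^*\hat\omega\Bigr|\;\leqslant\; s^{2n}\,\Vol(Y,g).\]
Letting $s\searrow h(g)$ finishes the proof. The main obstacle will be the construction of the densities $\Psi_s$ on $\widetilde Y$ without any curvature hypothesis on $g$: one has to work with the generic horofunction compactification, extract weak-$\ast$ accumulation points of the normalized Poincar\'e sums, and verify that their pushforwards to the Furstenberg boundary $\mathbb{T}^n$ produce $L^2$-densities for which the calibration computation of paragraph~\ref{inequality} goes through unchanged.
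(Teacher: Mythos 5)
Your overall architecture is the right one and matches the paper's: calibrate the pulled-back Poisson embedding $\Phi_0=\mathrm{Poisson}\circ\tilde f$ to get the lower bound $|\deg f|\,(h(g_0)^2/8n)^n\Vol(M,g_0)$ via Stokes/degree, and beat it from above by an equivariant immersion of $\widetilde Y$ into $\mathcal{S}^\infty$ whose volume is controlled by $h(g)$. But the step you yourself flag as ``the main obstacle'' is a genuine gap, and the route you propose for it would not work. You want to build $\Psi_s$ from Patterson--Sullivan-type densities on the horofunction boundary of $(\widetilde Y,g)$ and then push them forward to $\mathbb{T}^n$ ``via the boundary extension of $\tilde f$.'' For an arbitrary metric $g$ with no curvature or convexity hypothesis, $\tilde f$ has no boundary extension: a quasi-surjective $C^1$ equivariant map between universal covers induces nothing between the horofunction compactification of $\widetilde Y$ and the Furstenberg boundary $\mathbb{T}^n$ of $\widetilde M$, and weak-$*$ limits of normalized Poincar\'e sums live on the wrong boundary. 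This is not a technicality to be checked later; it is the reason the paper does something else.

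The paper's construction sidesteps the boundary entirely: it sets
\[\Phi_c(y,\theta)=\left(\frac{\int_{\widetilde Y}e^{-c\,d_g(y,z)}\,\Phi_0^2(z,\theta)\,dv_g(z)}{\int_{\mathbb{T}^n}\int_{\widetilde Y}e^{-c\,d_g(y,z)}\,\Phi_0^2(z,\theta)\,dv_g(z)\,d\theta}\right)^{1/2},\qquad c>h(g),\]
i.e.\ it convolves the \emph{interior} function $\Phi_0^2(z,\theta)$ (which already carries the $\mathbb{T}^n$-dependence through $\tilde f(z)\in\widetilde M$) against the kernel $e^{-c\,d_g(y,\cdot)}$ on $\widetilde Y$. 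Convergence needs only $c$ larger than the critical exponent, which equals $h(g)$ for the cocompact action, and the Jacobian estimate $\Vol(\Phi_c)\leqslant (c^2/8n)^n\Vol(Y,g)$ is the computation of Proposition~\ref{boundedabove} verbatim. A second, smaller confusion in your write-up: the bound $|\Psi_s^*\hat\omega(v_1,\dots,v_{2n})|\leqslant s^{2n}$ conflates the comass of the form (a fixed constant computed in Lemma~\ref{comass}, independent of $s$) with the Jacobian bound on $\Psi_s$ (where $s>h(g)$ actually enters); the correct chain is $|\int_Y\Psi_s^*\hat\omega|\leqslant\mathrm{comass}(\hat\omega)\cdot\Vol(\Psi_s)\leqslant\mathrm{comass}(\hat\omega)\cdot(s^2/8n)^n\Vol(Y,g)$, compared against $|\int_Y\Phi_0^*\hat\omega|=\mathrm{comass}(\hat\omega)\cdot|\deg f|\cdot(h(g_0)^2/8n)^n\Vol(M,g_0)$, which is where the correct normalization (rather than your $\int_M\omega=h(g_0)^{2n}\Vol(M,g_0)$) comes from.
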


I would like thank my PhD advisor C. Bavard for many useful discussions and comments. I am also grateful to G. Besson for his encouragements and the remarks he made on a preliminary version of this paper.

\section{Calibration method}\label{method}

\subsection{The spherical volume}\label{spherevol}

In this section, $M$ is a quotient of $\left(\mathbb{H}^2\right)^n$ by a uniform torsion-free lattice $\Gamma=\pi_1(M)$ and then $\widetilde{M}$ is the Riemannian product $\left(\mathbb{H}^2\right)^n$ with the usual metric of curvature $-1$ in each factors (called $g_0$ in both of the manifolds $M$ and $\tilde{M}$). Remark that $g_0$ is the best locally symmetric metric in the sense of \cite{connellfarbmer} Chapter 2. As above, $g$ is any other metric on $M$.\\

Let us start with a few notations. We choose once and for all a basepoint $o \in \mathbb{H}^2$, for instance $o=(0,0)$ in the Poincaré disk model. We will denote by the same letter $o$ the basepoint in $\left(\mathbb{H}^2\right)^n$. There will be no ambiguity resulting of that convention. The basepoint is used to compute the Buseman functions and to identify the boundary at infinity $\pa_{\infty} \mathbb{H}^2$ with the circle $\mathbb{S}^1$ (see below).

In what follows, $\pa_F(\widetilde{M})$ will denote the Furstenberg boundary of $\widetilde{M}$: the space of Weyl chambers at infinity in $\widetilde{M}$ emanating from the same point (see \cite{eberlein} or \cite{gjt} for further discussions).
There will be no conceptual difficulties coming from a general theory of Furstenberg boundary: we just use the fact that $\pa_F(\left(\mathbb{H}^2\right)^n)$ is identified to the $n$-dimensional torus $\mathbb{T}^n=\left(\mathbb{S}^1\right)^n$, by the above choice of a basepoint.
The Furstenberg boundary is better adapted to the case of higher rank symmetric spaces and it is one of the key points in \cite{connellfarbmer}. The Furstenberg boundary and the visual boundary are the same in the rank one case and that's why the distinction doesn't appear in \cite{bcg2}.\\

The Furstenberg boundary turns out to be a probability space in the following way. The circle $\mathbb{S}^1=\R\slash 2\pi\Z$ is endowed with the Lebesgue probability measure $d\theta$ (normalized in such a way that $d\theta(\mathbb{S}^1)=1$). The $n$-torus is the product (in the sense of probability spaces) of $n$ such circles. The spaces of $\Le^2$ functions on $\mathbb{T}^n$ is defined with respect to this measure.\\

The Poisson kernel $p_o$ of the disk is defined by:
\[p_o(x,\theta)=e^{-\mathcal{B}_o(x,\theta)},\]
where $\mathcal{B}_o(x,\theta)$ is the Buseman function.
A classical computation gives the explicit expression in the Poincaré disk
\[p_o(x,\theta)= \frac{1-\abs{x}^2}{\abs{x-e^{2i\pi\theta}}^2}\]
for any $x \in B(0,1)$ and $\theta \in \mathbb{S}^1$.\\

The definition of the spherical volume in \cite{bcg2} extends to $\left(\mathbb{H}^2\right)^n$ in the following way. We consider two representations of $\Gamma$. The first one in Isom$(\tilde{M},g_0)$ is the holonomy representation of $\Gamma$. The second one is the representation in the unit sphere of $\Le^2(\partial_F\widetilde{M})$, the Hilbert space of $\Le^{2}$-functions on the Furstenberg boundary with real values. We denote this unit sphere by $\mathcal{S}^{\infty}$ or $\mathcal{S}^{\infty}(\pa_F\widetilde{M})$ if the universal cover needs to be specified. More precisely, it is a unitary representation restricted to $\mathcal{S}^{\infty}$. It is defined by
\[(\gamma f) (\theta)= f(\gamma^{-1}(\theta))\sqrt{p_0(\gamma o_1,\theta^1)}\cdots \sqrt{p_0(\gamma o_n,\theta^n)} \]
where $o$ is the basepoint of $\left(\mathbb{H}^2\right)^n$, $\gamma o =(\gamma o_1,\cdots,\gamma o_n) \in \left(\mathbb{H}^2\right)^n$ and $\theta=(\theta^1,\cdots,\theta^n) \in \mathbb{T}^n$. This is the change of variables formula (for $\Le^2$ functions). Indeed the Jacobian of an isometry acting on the Furstenberg boundary is given by the product of Poisson kernels.\

Then, as in \cite{bcg2}, we introduce the family $\mathcal{N}$ of Lipschitz immersions
\[\Phi: \widetilde{M} \longrightarrow \Le^2(\pa_F\widetilde{M})\]
which are $\Gamma$-equivariant, that is satisfying the following equation
\[\Phi(\gamma x)=\gamma \Phi(x)\]
for all $\gamma\in\Gamma$ and all $x\in \widetilde{M}$. 
We also require that
\[\forall x \in \left(\mathbb{H}^2\right)^n,\;\;\; \norm{\Phi(x)}_{\Le^2}=1, \]
that is $\Phi(x) \in \mathcal{S}^\infty$ and that, for every $x\in\widetilde{M}$, $\Phi(x)$ is positive almost everywhere.
We can also consider those immersions as functions of two variables $\Phi : (x,\theta) \mapsto \Phi(x)(\theta)$.
The product of square roots of Poisson kernels
\[
\begin{array}{c c c c}
\Phi_0 = : & \left(\mathbb{H}^2\right)^n & \longrightarrow & \rons^{\infty} \\
& (x^1,\cdots,x^n) & \longmapsto & \sqrt{p_o(x^1,\cdot)}\times\cdots\times \sqrt{p_o(x^n,\cdot)}\\
\end{array}
\]
is an example of such an immersion (see Lemma \ref{poisson} below). Moreover it is an embedding. We will think of $\left(\mathbb{H}^2\right)^n$ as embedded in $\mathcal{S}^{\infty}$ by the product of Poisson kernels.
The spherical volume is then defined by
\[
\SphereVol(M)  =  \inf_{\Phi \in \mathcal{N}} \{\Vol (\Phi )\}\\
\]
where
\[
\Vol(\Phi)= \int_M{\sqrt{\abs{\det\,_{g_0}(g_{\Phi}(x))}} dv_{g_0}(x)} \}
,\]
where $g_{\Phi}$ denotes the almost everywhere defined pull-back of the usual Hilbertian metric on $\Le^2$ by the Lipschitz immersion $\Phi$ and $\det_g(g_{\Phi})$ is computed in any $g$-orthonormal basis. The integral on $M$ means that we integrate on a fundamental domain in $\left(\mathbb{H}^2\right)^n$ for the $\Gamma$-action and the equivariance relation satisfied by $\Phi$ shows that this does not depend on the fundamental domain.

The spherical volume is a transitional object. We use it to make a link between entropies and volumes. We want to prove the following inequalities.
\[\Vol(M,g_0) \left(\frac{h(g_0)^2}{8n}\right)^{n}=\SphereVol(M) \leqslant \Vol(M,g) \left(\frac{h(g)^2}{8n}\right)^{n}.\]

First we recall the second inequality.
\begin{prop}[\cite{bcg2} Chapter 3]\label{boundedabove}
We have
\[\SphereVol(M) \leqslant \left(\frac{h(g)^2}{8n}\right)^{n} \Vol (M,g).\]
\end{prop}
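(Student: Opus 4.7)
For each $c>h(g)$, the strategy is to exhibit an explicit map $\Phi_c\in\mathcal{N}$ satisfying $\Vol(\Phi_c)\leq (c^2/8n)^n\,\Vol(M,g)$; taking the infimum over $\mathcal{N}$ and letting $c\downarrow h(g)$ then yields the proposition.

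Following the construction of \cite{bcg2}, the natural candidate is built from a $g$-exponential weight. On $\widetilde M$ consider the function $y\mapsto e^{-c\,d_g(x,y)/2}$; the condition $c>h(g)$ is precisely what guarantees that this lies in $\Le^2(\widetilde M, dv_g)$, so that its $\Le^2$-normalization $\widetilde\Phi_c(x)$ is a unit vector depending on $x$. Since $\Gamma$ acts on $(\widetilde M,g)$ by isometries, $\widetilde\Phi_c$ is automatically $\Gamma$-equivariant with respect to the pullback representation on $\Le^2(\widetilde M, dv_g)$. A $\Gamma$-equivariant isometric identification of this Hilbert space with $\Le^2(\pa_F\widetilde M)$ (of the flexible kind used in \cite{bcg2}, insensitive to the particular probability space on which $\Gamma$ acts) then transports $\widetilde\Phi_c$ to the desired $\Phi_c\in\mathcal{N}$; since the computation of $\Vol(\Phi_c)$ is intrinsic to the Hilbert structure, the identification affects nothing beyond packaging.

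The core of the argument is an intrinsic pointwise estimate of the pullback metric $g_{\Phi_c}$. Writing $\mu_x=\widetilde\Phi_c(x)^2\,dv_g$ for the probability measure attached to $\widetilde\Phi_c(x)$, a direct differentiation under the normalization shows that for every $v\in T_x\widetilde M$,
\[
g_{\Phi_c}(v,v) \;=\; \frac{c^2}{4}\,\mathrm{Var}_{\mu_x}\!\bigl(d(d_g(\cdot,y))(v)\bigr) \;\leq\; \frac{c^2}{4}\,\mathbb{E}_{\mu_x}\!\bigl(\bigl(d(d_g(\cdot,y))(v)\bigr)^2\bigr).
\]
Because $y\mapsto d_g(x,y)$ is $1$-Lipschitz for $g$, its gradient at any differentiability point has $g$-norm at most $1$. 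Summing the above inequality over a $g$-orthonormal frame $(e_i)_{i=1}^{2n}$ of $T_x\widetilde M$ gives
\[
\mathrm{tr}_g\,g_{\Phi_c}(x) \;\leq\; \frac{c^2}{4}\,\mathbb{E}_{\mu_x}\!\Bigl[\sum_{i=1}^{2n}\bigl(d(d_g(\cdot,y))(e_i)\bigr)^2\Bigr] \;\leq\; \frac{c^2}{4}.
\]

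The final step is the arithmetic--geometric mean inequality applied to the nonnegative $g$-eigenvalues of $g_{\Phi_c}$. For any positive semidefinite $2n\times 2n$ matrix with trace $T$ one has $\det\leq (T/2n)^{2n}$, hence
\[
\sqrt{\det{}_g\!\bigl(g_{\Phi_c}\bigr)(x)} \;\leq\; \Bigl(\frac{\mathrm{tr}_g\,g_{\Phi_c}(x)}{2n}\Bigr)^{n} \;\leq\; \Bigl(\frac{c^2}{8n}\Bigr)^{n}.
\]
Integrating this pointwise bound over a fundamental domain of $\Gamma$ gives $\Vol(\Phi_c)\leq (c^2/8n)^n\,\Vol(M,g)$, so $\SphereVol(M)\leq (c^2/8n)^n\,\Vol(M,g)$, and the conclusion follows by letting $c\downarrow h(g)$. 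The main obstacle I expect is the first paragraph: making sure $\Phi_c$ actually lives in $\mathcal{N}$ with target $\Le^2(\pa_F\widetilde M)$ rather than the naturally arising $\Le^2(\widetilde M)$; the analytic content --- the $1$-Lipschitz bound on $d_g(\cdot,y)$ combined with AM--GM in dimension $2n$ --- is precisely what produces the sharp constant $8n$ in the statement (as opposed to the cruder bound $(c^2/4)^n$ one would get from the pointwise quadratic-form inequality alone).
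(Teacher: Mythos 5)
Your analytic core is correct and is exactly the computation the paper imports from \cite{bcg2}: differentiate the normalized exponential weight, recognize a variance, use that $d_g(\cdot,y)$ is $1$-Lipschitz to get $\mathrm{tr}_g\,g_{\Phi_c}\leqslant c^2/4$, apply AM--GM to get $\sqrt{\det_g g_{\Phi_c}}\leqslant (c^2/8n)^n$, integrate, and let $c\downarrow h(g)$. The genuine gap is the step you yourself flag: the ``$\Gamma$-equivariant isometric identification'' of $\Le^2(\widetilde{M},dv_g)$ with $\Le^2(\pa_F\widetilde{M})$ does not exist, and nothing of that kind is used in \cite{bcg2} or in this paper. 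The representation of $\Gamma$ on $\Le^2(\widetilde{M},dv_g)$ is a multiple of the regular representation (choosing a fundamental domain $F$ gives $\Le^2(\widetilde{M})\cong \ell^2(\Gamma)\otimes\Le^2(F)$), whereas the representation on $\Le^2(\mathbb{T}^n)$ fixed in Section 2.1 is a boundary (Koopman-type) representation twisted by square roots of Poisson kernels; these are not unitarily equivalent in general (boundary representations of such lattices tend to be irreducible, while a multiple of the regular representation of an infinite group never is). Moreover, even if an abstract intertwiner existed, it would not carry the condition that $\Phi(x)$ be positive almost everywhere, which is part of the definition of $\mathcal{N}$. As written, you have therefore not exhibited an element of $\mathcal{N}$, so you cannot yet conclude $\SphereVol(M)\leqslant\Vol(\widetilde{\Phi}_c)$.

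The paper's construction repairs exactly this point, and concretely: it convolves the exponential weight with the product of Poisson kernels, setting $\Psi_c(x,\theta)=\left(\int_{\widetilde{M}}e^{-c\,d_g(x,y)}\,p_0(y^1,\theta^1)\cdots p_0(y^n,\theta^n)\,dv_g(y)\right)^{1/2}$ and $\Phi_c(x,\cdot)=\Psi_c(x,\cdot)\big/\left(\int_{\mathbb{T}^n}\Psi_c^2(x,\theta)\,d\theta\right)^{1/2}$. This map lands in $\mathcal{S}^\infty\subset\Le^2(\mathbb{T}^n)$, is positive, and is equivariant by the cocycle property of the Poisson kernel; since $\int_{\mathbb{T}^n}\prod_i p_0(y^i,\theta^i)\,d\theta=1$, one extra Cauchy--Schwarz in the $y$-variable reduces the trace estimate $\mathrm{tr}_g\,g_{\Phi_c}\leqslant c^2/4$ to precisely your $1$-Lipschitz bound, after which your AM--GM step and the limit $c\downarrow h(g)$ go through verbatim. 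So the fix is not a repackaging of your map by an isometry but a different (Poisson-smoothed) competitor whose volume satisfies the same bound for the same reasons.
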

\begin{proof}
We refer to Chapter 3 of \cite{bcg2} for the same proof in the rank one case.
There are only very few modifications to make in our setting.
Let's first reintroduce a family of immersions which satisfy the conditions above. For a real parameter $c>h(g)$, we consider :
\[\Psi_c(x,\theta)=\left(\int_{\tilde{M}}e^{-cd(x,y)} p_0(y^1,\theta^1)\cdots p_0(y^n,\theta^n)dv_g(y)\right)^{1/2}.\]
The condition on $c$ ensures that the integral converges. Indeed for uniform lattices volume entropy and critical exponent are the same. Then we define an element of $\mathcal{N}$ by
\[\Phi_c(x,\theta)=\frac{\Psi_c(x,\theta)}{\left(\int_{\mathbb{T}^n}\Psi_c^2(x,\theta)d\theta\right)^{1/2}}.\]
We just have replaced the boundary sphere by the Furstenberg boundary. We can now perform the very same computation as \cite{bcg2} (p.742 for a proof of the Lipschitz regularity and p.746 for the volume computation). We get the required estimate for the spherical volume.
\end{proof}

It remains to check now, if $M$ is a compact quotient of $\left(\mathbb{H}^2\right)^n$, that
\[\SphereVol(M) = \left(\frac{h(g_0)^2}{8n}\right)^{n} \Vol (M,g_0).\]
In fact, we can find an immersion $\Phi$ of $\mathcal{N}$ which has precisely the needed volume.

\begin{lem}\label{poisson}
Let $M$ be a compact quotient of $\left(\mathbb{H}^2\right)^n$. Let $\Phi_0 : \left(\mathbb{H}^2\right)^n \rightarrow \Le^2(\mathbb{T}^n)$ be defined by
\[\Phi_0(x^1,\cdots,x^n,\theta^1,\cdots,\theta^n)=\prod_{i=1}^n\sqrt{\frac{1-\abs{x^i}^2}{\abs{x^i-e^{2i\pi\theta^i}}^2}}.\]
Then
\begin{enumerate}
\item $\Phi_0$ is a smooth embedding, it belongs to $\mathcal{N}$ and
\[\Vol(\Phi_0)=\left(\frac{h(g_0)^2}{8n}\right)^n\Vol(g_0)=\left(\frac{1}{8}\right)^n\Vol(g_0).\]
\item The tangent space at the basepoint $\mathds{1}\in\mathcal{S}^\infty$ of the image of $\Phi_0$, $T_{\mathds{1}}\Phi_0(\left(\mathbb{H}^2\right)^n)$ is generated by the $2n$ functions
\[\fonction{f_i}{\mathbb{T}^n}{\R}{\theta=(\theta^1,\cdots,\theta^n)}{\sqrt{2}\cos\theta^i}\]
and
\[\fonction{f_{i+1}}{\mathbb{T}^n}{\R}{\theta=(\theta^1,\cdots,\theta^n)}{\sqrt{2}\sin\theta^i},\]
for $i=1,\cdots,n$.
\end{enumerate}
\end{lem}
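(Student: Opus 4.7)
The plan is to verify the four claims (membership in $\mathcal{N}$, embedding, volume identity, tangent-space description) in order, exploiting the product structure to reduce to the single-factor case treated in \cite{bcg2}.

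First, to show $\Phi_0 \in \mathcal{N}$: smoothness of $\sqrt{p_o(\cdot,\cdot)} : \mathbb{H}^2 \to \Le^2(\mathbb{S}^1)$ is a standard rank-one fact, and via the Hilbert tensor product $\Le^2(\mathbb{T}^n)\cong\Le^2(\mathbb{S}^1)^{\otimes n}$ this gives smoothness of $\Phi_0$ as a map into $\Le^2(\mathbb{T}^n)$. The unit-norm condition is a Fubini computation using that $p_o(x^i,\cdot)$ is a probability density. Positivity is obvious. For $\Gamma$-equivariance I would verify the factorwise Poisson cocycle identity $p_o(\gamma x,\theta)=p_o(x,\gamma^{-1}\theta)\,p_o(\gamma o,\theta)$, which follows from the isometry invariance of Busemann functions $\mathcal{B}_{\gamma o}(\gamma x,\gamma\theta)=\mathcal{B}_o(x,\theta)$ combined with the Busemann cocycle relation; taking square roots and the product over factors gives exactly the action on $\mathcal{S}^{\infty}$ spelled out in the text. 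Injectivity of $\Phi_0$ then follows because the measure $\Phi_0(x)^2\,d\theta$ has marginal $p_o(x^i,\theta^i)d\theta^i$ on the $i$-th circle, whose barycenter in the Poincaré disk recovers $x^i$.

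The main content is the volume. Writing $\Phi_0=\prod_i\phi_i$ with $\phi_i=\sqrt{p_o(x^i,\theta^i)}$, differentiation gives $d\phi_i=-\tfrac{1}{2}\phi_i\,d\mathcal{B}_o(\cdot,\theta^i)$. Expanding $\langle d\Phi_0(v),d\Phi_0(w)\rangle_{\Le^2(\mathbb{T}^n)}$ by Fubini and using $\int_{\mathbb{S}^1}\phi_i^2\,d\theta^i=1$, the cross terms for $i\neq j$ contain the factor $\int_{\mathbb{S}^1}\phi_i\,d\phi_i(v^i)\,d\theta^i=\tfrac12 d\|\phi_i\|^2(v^i)=0$, so the pull-back metric splits as an orthogonal sum over the factors. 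In a single factor the classical hyperbolic identity $\int_{\mathbb{S}^1}(d\mathcal{B}_o(v,\cdot))^2 p_o(x,\theta)\,d\theta=\tfrac12|v|_{g_0}^2$ yields $g_{\phi_i}=\tfrac18\,g_0$ and hence globally $g_{\Phi_0}=\tfrac18 g_0$, so $\Vol(\Phi_0)=(1/8)^n\Vol(M,g_0)$. The equality with $\bigl(h(g_0)^2/(8n)\bigr)^n\Vol(M,g_0)$ is then equivalent to $h(g_0)^2=n$, which I would check by a Lagrange-multiplier estimate: the volume of a ball of radius $R$ in $(\mathbb{H}^2)^n$ is $\int_{\sum r_i^2\le R^2}\prod_i 2\pi\sinh r_i\,dr_i\sim e^{\max\sum r_i}=e^{\sqrt{n}R}$, the maximum being attained at $r_i=R/\sqrt{n}$.

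For the tangent-space statement, I would Taylor-expand in the Poincaré model: near $x=0$, $p_o(x,\theta)=1+2(x_1\cos\theta+x_2\sin\theta)+O(|x|^2)$, hence $\sqrt{p_o}(x,\theta)=1+(x_1\cos\theta+x_2\sin\theta)+O(|x|^2)$. Since $\Phi_0(o)=\mathds{1}$ and the product structure lets a single factor vary while the others stay frozen at $1$, the image of $d\Phi_0|_o$ is spanned by the $2n$ functions $\cos\theta^i,\sin\theta^i$, which is the same subspace as the one spanned by the (unit-norm) vectors $\sqrt{2}\cos\theta^i,\sqrt{2}\sin\theta^i$. The one genuinely non-trivial input is the hyperbolic identity $\int_{\mathbb{S}^1}(d\mathcal{B}_o(v))^2 p_o\,d\theta=\tfrac12|v|^2$; everything else is bookkeeping based on the product structure and the Poisson kernel cocycle.
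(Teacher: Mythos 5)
Your proposal is correct and follows essentially the same route as the paper: reduce to a single $\mathbb{H}^2$ factor via the product structure, verify equivariance through the Busemann/Poisson cocycle (the paper delegates this to Patterson--Sullivan theory), compute the differential at the basepoint to get the immersion property and the tangent space spanned by $\cos\theta^i,\sin\theta^i$, and obtain the volume from the single-factor identity $g_{\Phi_0}=\tfrac18 g_0$ (which the paper cites from Besson--Courtois--Gallot, p.~744). The only additions are details the paper leaves implicit --- the injectivity argument and the check that $h(g_0)=\sqrt{n}$ --- and both are carried out correctly.
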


\begin{proof}
It is enough to handle the same situation with only one factor $\mathbb{H}^2$, the situation appearing completely as a product. The family of measures $(\nu_z)_{z\in\mathbb{H}^2}$ on $\mathbb{S}^1$ which are in the Lebesgue class and satisfy
\[\frac{d\nu_z}{d\theta}=\frac{1-\abs{z}^2}{\abs{z-e^{2i\pi\theta}}^2} \]
are in fact the so-called Patterson-Sullivan measures of the hyperbolic plane which were constructed in \cite{patterson}. We refer to this original paper for the equivariance relation (remark this family of measures is even SL$_2(\R)$-equivariant).\\

Then it is enough to show that $\Phi_0$ is an immersion at the basepoint $o$, SL$_2(\R)$ acting (transitively on $\mathbb{H}^2$) by diffeomorphisms. The differential of $\Phi_0$ is easy to compute and we get the basis we claimed for the tangent.

This shows in particular the point 2 of the above Lemma which shall be used later. As the $2$ functions $\cos$ and $\sin$ are a free family in $\Le^2(\mathbb{S}^1)$, we obtain that $\Phi_0$ is an immersion at $o$.\\

Finally the volume of $\Phi_0$ has been computed in \cite{bcg2} at page 744.
\end{proof}

\subsection{Calibration theory}\label{calibration}

In order to show that the Spherical Volume is achieved by the map $\Phi_0$, we use a classical method of calibration, following \cite{bcg2} chapter 4. Let us make a brief review on how we implement this method.\\

Let $\Omega$ be a differential $2n$-form which is $\Gamma$-invariant.

\begin{defi}
\begin{enumerate}
\item The comass of $\Omega$ is the quantity
\[\comass(\Omega)=\sup \abs{\Omega_{\varphi}(f_1,\cdots,f_{2n})},\]
where the supremum is taken over all functions $\varphi\in\mathcal{S}^\infty$ and every orthonormal family $(f_1,\cdots,f_{2n})$ where each $f_i$ belongs to $T_{\varphi}\mathcal{S}^\infty$.\
\item One says that the differential form $\Omega$ calibrates some immersion $\Phi_0\in\mathcal{N}$ if
\begin{enumerate}
\item The form $\Omega$ is closed,\
\item its comass is finite and nonzero and \
\item when we restrict $\Omega$ to orthonormal families, it is maximal on the tangent space $T\Phi_0(\left(\mathbb{H}^2\right)^n)$, that is,
\[\frac{\abs{\Omega_{\Phi_0(x)}(d_x\Phi_0(u_1), \cdots,d_x\Phi_0(u_{2n}))}}{\norm{d_x\Phi_0(u_1)\wedge \cdots\wedge d_x\Phi_0(u_{2n})}} =\comass(\Omega)\]
for every $x\in\left(\mathbb{H}^2\right)^n$ and every orthonormal family $(u_1,\cdots,u_{2n})$
\end{enumerate}
\end{enumerate}
\end{defi}

Here is the way we shall exploit a calibrating differential form. The following proposition follows readily from Stokes theorem. Here, we emphasize the fact that we use in a decisive way the compactness hypothesis for $M$ (see \cite{stormnonuniform}).

\begin{prop}[\cite{bcg2} proposition 4.3 p.748]
Assume there exists a differential $2n$ form which calibrates an immersion $\Phi_0\in\mathcal{N}$. Then
\[\SphereVol(M)=\Vol (\Phi_0). \]
\end{prop}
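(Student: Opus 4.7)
The plan is to leverage the defining properties of a calibrating form $\Omega$ together with a Stokes/de Rham argument. Fix an arbitrary $\Phi \in \mathcal{N}$; the claim $\SphereVol(M) = \Vol(\Phi_0)$ reduces to showing $\Vol(\Phi_0) \leqslant \Vol(\Phi)$, since $\Phi_0$ itself belongs to $\mathcal{N}$.

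The first ingredient is the pointwise comass inequality. For any orthonormal frame $(u_1,\ldots,u_{2n})$ at $x\in\widetilde{M}$, the definition of comass gives
\[\abs{\Omega_{\Phi(x)}\bigl(d_x\Phi(u_1),\ldots,d_x\Phi(u_{2n})\bigr)} \leqslant \comass(\Omega)\cdot\norm{d_x\Phi(u_1)\wedge\cdots\wedge d_x\Phi(u_{2n})},\]
with equality at every point for $\Phi_0$ by hypothesis. Since $\Omega$ is $\Gamma$-invariant and $\Phi$ is $\Gamma$-equivariant, $\Phi^*\Omega$ descends to a form on $M$. Integrating over a fundamental domain and bounding the pullback by the $g_\Phi$-volume form yields
\[\int_M \Phi^*\Omega \leqslant \comass(\Omega)\cdot\Vol(\Phi)\qquad\text{and}\qquad \int_M \Phi_0^*\Omega = \comass(\Omega)\cdot\Vol(\Phi_0).\]

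The second ingredient is that these two integrals are equal. I would construct a $\Gamma$-equivariant homotopy $H:\widetilde{M}\times[0,1]\to\mathcal{S}^\infty$ between $\Phi_0$ and $\Phi$ via the spherical interpolation
\[H(x,t) = \frac{(1-t)\Phi_0(x) + t\Phi(x)}{\norm{(1-t)\Phi_0(x) + t\Phi(x)}_{\Le^2}},\]
which is well defined since both functions are positive almost everywhere, and equivariant because the $\Gamma$-action on $\Le^2$ is unitary. Applying the de Rham homotopy formula to the closed $2n$-form $H^*\Omega$ on $M\times[0,1]$ (closed because $\Omega$ is) yields $\Phi^*\Omega - \Phi_0^*\Omega = d\eta$ for some $(2n-1)$-form $\eta$ on $M$; Stokes' theorem on the closed manifold $M$ then gives $\int_M \Phi^*\Omega = \int_M \Phi_0^*\Omega$. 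Combining with the two displays above forces $\Vol(\Phi_0)\leqslant\Vol(\Phi)$.

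The main obstacle is making the de Rham/Stokes step rigorous in this infinite-dimensional, low-regularity setting: the target $\mathcal{S}^\infty$ is the unit sphere of a Hilbert space, the immersions in $\mathcal{N}$ are only Lipschitz so pullbacks exist only almost everywhere, and the form $\Omega$ itself must be regular enough for the homotopy formula to apply. This is precisely the analytic framework set up in \cite{bcg2}. The compactness of $M$ is essential here, as flagged in the statement by the reference to \cite{stormnonuniform}: without it the Stokes argument would leave boundary contributions that cannot be controlled.
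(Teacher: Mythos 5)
Your argument is correct and is precisely the one the paper relies on: the paper defers to \cite{bcg2}, Proposition 4.3, whose proof is exactly this combination of the pointwise comass inequality, an equivariant homotopy between $\Phi$ and $\Phi_0$ inside $\mathcal{S}^\infty$ (legitimate since both are positive a.e.), and Stokes' theorem on the compact quotient $M$ applied to the closed form $\Omega$. The only cosmetic point is that the equalities should be stated with absolute values (the calibration condition fixes $\abs{\int_M \Phi_0^*\Omega}$, with a sign to be fixed once by continuity), but this does not affect the conclusion.
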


From now on it remains to find such a calibrating form for the Poisson kernel.

\section{A calibrating form for \texorpdfstring{$\left(\mathbb{H}^2\right)^n$}{h22}}\label{proof}

\subsection{Definition of the form}\label{form}

We will denote by $e$ the Euler class of the circle,
\[
   \left \{
   \begin{array}{r c l}
      e(\theta_0,\theta_1,\theta_2)  & =  & 1 \mbox{ if the points are cyclically ordered on $\mathbb{S}^1$  } \\
      e(\theta_0,\theta_1,\theta_2)  & =  & -1 \mbox{ if not} \\
   \end{array}
   \right .
\]
Then we consider the application $C : \left(\mathbb{T}^n\right)^{2n+1}\rightarrow \R$ given by the following formula,
\[C(\theta_0,\cdots,\theta_{2n})=\frac{1}{(2n+1)!} \sum_{\sigma\in\mathfrak{S}_{2n+1}}\sign (\sigma) \prod_{i=1}^ne(\theta^i_{\sigma(2i-2)},\theta^i_{\sigma(2i-1)},\theta^i_{\sigma(2i)}). \]
For example if $n=1$ then $C=e$, the Euler class and if $n=2$, this is the application used in \cite{bucherh22}. This map $C$ could be seen as the alternation of the cup product of $n$ Euler classes. In particular $C$ is alternate, that is,
\[C(\theta_{\sigma(0)},\cdots,\theta_{\sigma(2n)})=\sign (\sigma) C(\theta_0,\cdots,\theta_{2n}).\]
Then the following formula defines a differential $2n$-form on $\mathcal{S}^\infty$:
\[\Omega_{\varphi}(f_1,\cdots,f_{2n})=\int_{\left(\mathbb{T}^n\right)^{2n+1}} C(\theta_0,\cdots,\theta_{2n})\varphi^2(\theta_0) \varphi f(\theta_1)\cdots \varphi f (\theta_{2n}) d\theta_0\cdots d\theta_{2n}.
\]
We conclude this paragraph by stating some properties of $C$ that we shall use later on.

\begin{prop}\label{invariancecocycle}
Let $G$ be the group $\left(\Diff^+(\mathbb{S}^1)\right)^n$ embedded diagonally in $\Diff^+(\mathbb{T}^n)$. The map $C$ is $G$-invariant, that is
\[\forall g\in G,\;\; C(g\theta_0,\cdots,g\theta_{2n})=C(\theta_0,\cdots,\theta_{2n}). \]
\end{prop}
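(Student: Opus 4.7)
The plan is to reduce the $G$-invariance of $C$ to the invariance of the Euler cocycle $e$ under $\Diff^+(\mathbb{S}^1)$, exploiting the fact that the formula defining $C$ couples each factor $e$ to a single circle coordinate.

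First I would verify the single-factor statement: for any $h\in\Diff^+(\mathbb{S}^1)$ and any triple of pairwise distinct points $\alpha_0,\alpha_1,\alpha_2\in\mathbb{S}^1$,
\[e(h\alpha_0,h\alpha_1,h\alpha_2)=e(\alpha_0,\alpha_1,\alpha_2).\]
Indeed, $e$ is defined as the sign of the cyclic ordering of the three points on $\mathbb{S}^1$, and cyclic order is a purely topological invariant preserved by any orientation-preserving homeomorphism of the circle. On the measure-zero locus where two of the three points coincide one may adopt any convention (say $e=0$), and the identity above holds there vacuously; this does not affect any subsequent integration of $C$ against $\Le^2$ densities on $(\mathbb{T}^n)^{2n+1}$.

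Next I would unpack the diagonal action. Writing $g=(g_1,\ldots,g_n)\in G$, one has $g\cdot\theta_j=(g_1\theta_j^1,\ldots,g_n\theta_j^n)$. The crucial observation is that the $i$-th factor in the product
\[\prod_{i=1}^n e\!\left(\theta^i_{\sigma(2i-2)},\theta^i_{\sigma(2i-1)},\theta^i_{\sigma(2i)}\right)\]
depends only on the $i$-th coordinates of the three points $\theta_{\sigma(2i-2)},\theta_{\sigma(2i-1)},\theta_{\sigma(2i)}$. Replacing these points by their images under $g$, this $i$-th factor becomes
\[e\!\left(g_i\theta^i_{\sigma(2i-2)},\,g_i\theta^i_{\sigma(2i-1)},\,g_i\theta^i_{\sigma(2i)}\right),\]
which by the first step equals the original factor. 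Hence every summand indexed by $\sigma\in\mathfrak{S}_{2n+1}$ is unchanged, and therefore so is $C(g\theta_0,\ldots,g\theta_{2n})$.

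There is no real obstacle in this proposition: it is a structural consequence of (i) the topological invariance of cyclic order under $\Diff^+(\mathbb{S}^1)$ and (ii) the product structure of $C$, which lets the $n$ circle diffeomorphisms act independently on disjoint pieces of data. The only subtlety worth mentioning is the diagonal locus in $(\mathbb{T}^n)^{2n+1}$, which is harmless for all applications of $C$ later in the paper.
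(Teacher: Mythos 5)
Your proof is correct and follows essentially the same route as the paper: observe that $\Diff^+(\mathbb{S}^1)$ preserves cyclic order, hence fixes the Euler cocycle $e$, and then note that the $i$-th factor in the product defining $C$ only sees the $i$-th coordinate acted on by $g_i$, so each summand over $\sigma\in\mathfrak{S}_{2n+1}$ is unchanged. Your extra remark about the degenerate locus where two points coincide is a harmless addition the paper omits.
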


\begin{proof}
The group $\Diff^+(\mathbb{S}^1)$ preserves the cyclic order on $\mathbb{S}^1$ then, for $\gamma\in \Diff^+(\mathbb{S}^1)$,
\[e(\gamma\theta_0,\gamma\theta_1,\gamma\theta_2)=e(\theta_0,\theta_1,\theta_2).\]
Taking $g=(\gamma^1,\cdots,\gamma^n)\in G$, one has
\begin{eqnarray*}
C(g\theta_0,\cdots,g\theta_{2n}) & = & \frac{1}{(2n+1)!} \sum_{\sigma\in\mathfrak{S}_{2n+1}}\sign (\sigma) \prod_{i=1}^n e(\gamma^i\theta^i_{\sigma(2i-2)},\gamma^i\theta^i_{\sigma(2i-1)},\gamma^i\theta^i_{\sigma(2i)})\\
 & = & \frac{1}{(2n+1)!} \sum_{\sigma\in\mathfrak{S}_{2n+1}}\sign (\sigma) \prod_{i=1}^ne(\theta^i_{\sigma(2i-2)},\theta^i_{\sigma(2i-1)},\theta^i_{\sigma(2i)})\\
 & = & C(\theta_0,\cdots,\theta_{2n}).
\end{eqnarray*}
\end{proof}

\begin{prop}\label{closedcocycle}
The map $C$ is closed as a combinatorial cochain, that is
\[\sum_{i=0}^{2n+1}(-1)^i C(\theta_0,\cdots,\hat{\theta_i},\cdots,\theta_{2n+1})=0. \]
\end{prop}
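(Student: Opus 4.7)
My plan is to identify $C$ as the full antisymmetrization of an iterated cup product of pulled-back Euler cocycles, and then to invoke the classical fact that the coboundary commutes with alternation on cochains.

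For each $i=1,\dots,n$, let $p_i\colon\mathbb{T}^n\to\mathbb{S}^1$ denote the projection on the $i$-th factor and set $e_i:=p_i^{\ast}e$, so that as a homogeneous $2$-cochain on $\mathbb{T}^n$,
\[e_i(\theta_0,\theta_1,\theta_2)=e(\theta_0^i,\theta_1^i,\theta_2^i).\]
Each $e_i$ is a $2$-cocycle because the Euler cochain $e$ is closed on $\mathbb{S}^1$ (a three-point cyclic order condition) and pullback commutes with $d$. I would then form the iterated cup product
\[F:=e_1\smile\cdots\smile e_n,\qquad F(\theta_0,\dots,\theta_{2n})=\prod_{i=1}^n e(\theta_{2i-2}^i,\theta_{2i-1}^i,\theta_{2i}^i),\]
which is a $2n$-cocycle by the Leibniz rule $d(\alpha\smile\beta)=d\alpha\smile\beta+(-1)^{\lvert\alpha\rvert}\alpha\smile d\beta$ applied inductively. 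A direct comparison with the definition of $C$ in the excerpt reveals that $C$ is precisely the total antisymmetrization
\[C\;=\;\mathrm{Alt}(F)\;=\;\frac{1}{(2n+1)!}\sum_{\sigma\in\mathfrak{S}_{2n+1}}\sign(\sigma)\,F(\theta_{\sigma(0)},\dots,\theta_{\sigma(2n)}).\]
The standard identity $d\circ\mathrm{Alt}=\mathrm{Alt}\circ d$ on cochains then yields $dC=\mathrm{Alt}(dF)=0$, which is exactly the stated cocycle equation.

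The only non-routine ingredient is the commutation $d\,\mathrm{Alt}=\mathrm{Alt}\,d$; it is a classical fact in the Eilenberg--MacLane cochain complex. For a self-contained verification, I would expand the left-hand side into a double sum over pairs (omitted index $j\in\{0,\dots,2n+1\}$, permutation $\pi\in\mathfrak{S}_{2n+1}$ of the remaining slots) and use the natural bijection between such pairs and permutations $\tau\in\mathfrak{S}_{2n+2}$ of the full argument list, checking that the sign $(-1)^j\sign(\pi)$ coincides with $\sign(\tau)$ up to an overall $j$-independent factor coming from the cycle that moves the omitted position to the end. The different normalizations $1/(2n+1)!$ and $1/(2n+2)!$ on the two sides are reconciled by the $(2n+2)$ choices of omitted index, and the expression reassembles as $\mathrm{Alt}(dF)$, concluding the proof.
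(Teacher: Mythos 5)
Your argument is correct. Note that the paper itself gives no proof here: it simply refers to Bucher's paper, p.~331. What you supply is precisely the argument that citation stands in for, and it is the one the paper itself gestures at when it says that $C$ ``could be seen as the alternation of the cup product of $n$ Euler classes'': each $e_i=p_i^{*}e$ is a $2$-cocycle since the orientation cocycle on $\mathbb{S}^1$ is closed and pullback commutes with $d$; the shared vertices $\theta_{2i}$ between consecutive factors in $\prod_i e(\theta^i_{2i-2},\theta^i_{2i-1},\theta^i_{2i})$ identify that product with the iterated cup product in the homogeneous complex, so it is a cocycle by Leibniz; and $C=\mathrm{Alt}(F)$ is then closed because alternation is a cochain map. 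The identity $d\circ\mathrm{Alt}=\mathrm{Alt}\circ d$ is indeed classical, and your sketch of the bijective sign bookkeeping between pairs (omitted index, permutation of the remaining slots) and permutations of the full list is the standard verification. So the proposal is sound and, in effect, makes the paper more self-contained than the original; the only caveat worth a remark is that the cocycle identity for $e$ fails on the measure-zero set of degenerate (non-distinct) triples, which is harmless for all the integral identities in which $C$ is subsequently used.
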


\begin{proof}
We refer to \cite{bucherpolygons} p.331.
\end{proof}

\subsection{An invariance relation satisfied by \texorpdfstring{$\Omega$}{w}}\label{invarianceform}

Let us show now that the form $\Omega$ is invariant under the action of the group $G=\left(\Diff^+(\mathbb{S}^1)\right)^n$. This group contains $\left(\PSL_2(\R)\right)^n$ and extends its action on $\Le^2(\mathbb{T}^n)$ by the change of variables formula,
\[(gf)(\theta)=\sqrt{\Jac g^{-1}(\theta)}f\circ g^{-1}(\theta). \]
The action being unitary, we let act $G$ on the unit sphere $\mathcal{S}^\infty$ by restriction and on the tangent space of this sphere. Then we have, for $g\in G$,
\begin{eqnarray*}
(g^*\Omega)_{\varphi}(f_1,\cdots,f_{2n}) &  = &  \Omega_{g\varphi}(gf_1,\cdots,gf_{2n})\\
& =  & \int_{\left(\mathbb{T}^n\right)^{2n+1}} C(\theta_0,\cdots,\theta_{2n})(g\varphi)^2(\theta_0)\prod_{i=1}^{2n} (g\varphi) (gf_i)(\theta_i)d\theta_{i}
\\
& =  & \int_{\left(\mathbb{T}^n\right)^{2n+1}} C(\theta_0,\cdots,\theta_{2n})\Jac g^{-1}(\theta_0)\varphi^2(\theta_0) \prod_{i=1}^{2n}\Jac g^{-1}(\theta_i)\varphi f_i(\theta_i)d\theta_{i}.
\end{eqnarray*}
We now perform the change of variables formula $\theta_i'=g^{-1}(\theta_i)$. We get
\[(g^*\Omega)_{\varphi}(f_1,\cdots,f_{2n}) = \int_{\left(\mathbb{T}^n\right)^{2n+1}} C(g\theta_0,\cdots,g\theta_{2n})\varphi^2(\theta_0) \varphi f(\theta_1)\cdots \varphi f (\theta_{2n}) d\theta_0\cdots d\theta_{2n}.\]
The conclusion now follows from proposition \ref{invariancecocycle}.

\subsection{Closure of \texorpdfstring{$\Omega$}{w}}\label{closure}

In order to use the method of calibration, we have to deal with a closed form. This is the aim of this paragraph.

\begin{prop}
The differential $2n$-form $\Omega$ is closed.
\end{prop}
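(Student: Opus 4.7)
The integral formula defining $\Omega$ makes sense on the whole ambient Hilbert space $\Le^2(\mathbb{T}^n)$, so I would introduce the extension $\widehat{\Omega}$ of $\Omega$ to a $2n$-form on $\Le^2$ given by the same formula; then $\Omega=\iota^\ast\widehat{\Omega}$ for the inclusion $\iota\colon\mathcal{S}^\infty\hookrightarrow\Le^2$, and hence $d\Omega=\iota^\ast d\widehat{\Omega}$. At a point $\varphi\in\Le^2$, I would extend tangent vectors $f_0,\ldots,f_{2n}$ as constant vector fields on the vector space $\Le^2$; their Lie brackets vanish and Koszul's formula reads
\[d\widehat{\Omega}_\varphi(f_0,\ldots,f_{2n})=\sum_{i=0}^{2n}(-1)^i\,\partial_{f_i}\bigl[\widehat{\Omega}(f_0,\ldots,\widehat{f_i},\ldots,f_{2n})\bigr](\varphi).\]
Differentiating under the integral, the factor $\varphi^2(\theta_0)$ contributes a ``pure'' term of the form $2\varphi(\theta_0)f_i(\theta_0)$ at position~$0$, while each factor $(\varphi f_k)(\theta_k)$ contributes a ``mixed'' term involving the pointwise product $f_i(\theta_k)f_k(\theta_k)$.

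The first step is to show that the mixed contributions cancel pairwise. Denote by $A$ the $(2n+1)$-linear alternating form on $\Le^2$ induced by $C$. For each pair $i<j$, the term arising from differentiating in $f_i$ at the position of $f_j$ and the one arising from differentiating in $f_j$ at the position of $f_i$ differ only by a cyclic permutation of the $A$-arguments through positions $i+1,\ldots,j$ (one uses the commutativity $f_if_j=f_jf_i$ of pointwise multiplication). This cyclic permutation has length $j-i$ and sign $(-1)^{j-i-1}$, which combined with the Koszul signs $(-1)^i$ and $(-1)^j$ yields exact cancellation. The surviving pure terms, after relocating $\varphi f_i$ from slot~$0$ to slot~$i$ (an operation of sign $(-1)^i$), each reduce to $2A(\varphi f_0,\ldots,\varphi f_{2n})$, so that
\[d\widehat{\Omega}_\varphi(f_0,\ldots,f_{2n})=2(2n+1)\,A(\varphi f_0,\varphi f_1,\ldots,\varphi f_{2n}).\]

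The decisive step---the only place where $\delta C=0$ intervenes---is to show that the right-hand side vanishes whenever $f_0,\ldots,f_{2n}\in T_\varphi\mathcal{S}^\infty$, i.e.\ when $\langle\varphi,f_i\rangle_{\Le^2}=0$ for all $i$. I would integrate the identity of Proposition~\ref{closedcocycle} against the test function
\[\varphi^2(\theta_0)\prod_{k=1}^{2n}(\varphi f_k)(\theta_k)\cdot(\varphi f_0)(\theta_{2n+1})\]
over $(\mathbb{T}^n)^{2n+2}$. In the resulting alternating sum over $i=0,\ldots,2n+1$, the $i=0$ term collapses, via $\|\varphi\|_{\Le^2}^2=1$, to $A(\varphi f_1,\ldots,\varphi f_{2n},\varphi f_0)$; the terms $i=1,\ldots,2n$ each carry a factor $\langle\varphi,f_i\rangle=0$; and the $i=2n+1$ term carries a factor $\langle\varphi,f_0\rangle=0$. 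Therefore $A(\varphi f_0,\ldots,\varphi f_{2n})=0$ on $\mathcal{S}^\infty$, whence $d\Omega=0$. The main obstacle is the combinatorial bookkeeping of the pair-cancellation, together with engineering the correct test function so that the tangency constraints kill all but the desired term.
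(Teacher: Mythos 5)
Your proof is correct, and it reaches the paper's two decisive inputs --- the combinatorial closedness $\partial C=0$ of Proposition~\ref{closedcocycle} and the vanishing of $\int_{\mathbb{T}^n}\varphi f_i\,d\theta$ for tangent vectors to $\mathcal{S}^\infty$ --- by a more direct route. The paper does not differentiate on $\Le^2$ at all: it first pushes $\Omega$ forward to the affine space of probability measures via $\varphi\mapsto\varphi^2\,d\theta$, writes $\Omega=2^{2n}\mathcal{A}^*\Omega'$, and differentiates $\Omega'$ there. Since $\Omega'_\mu$ is \emph{linear} in the basepoint $\mu$ and the tangent slots do not involve $\mu$, the Lang/Koszul formula produces only the ``pure'' terms, which skew-symmetry immediately collects into $(2n+1)\int C\,d\alpha_0\cdots d\alpha_{2n}$; the entire first half of your argument --- the pairwise cancellation of the mixed terms $f_if_k$ coming from the quadratic dependence on $\varphi$ --- is thereby avoided. (Your sign bookkeeping for that cancellation is right: the cycle of length $j-i$ contributes $(-1)^{j-i-1}$, which against the Koszul signs $(-1)^i$ and $(-1)^j$ gives exact cancellation, and the pure terms each pick up $(-1)^i\cdot(-1)^i=1$ after relocation.) What your version buys is self-containedness: you never need to set up the Banach-manifold structure on the space of Radon measures or invoke that pullback commutes with $d$ in that setting. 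The final step is the same computation in both proofs: integrating $\partial C=0$ against a test function in which all but one term of the alternating sum carries a zero-mass factor, leaving exactly the quantity one wants to kill.
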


\begin{proof}

To differentiate $\Omega$, it is easier to have an expression on a space of measures instead of the unit $\Le^2$-sphere. Let us begin by a quick review on the structure of the space of measures we will deal with.\\

Let $\ronm$ be the Banach vector space dual to $\mathcal{C}^0(\mathbb{T}^n)$, the Banach space of continuous functions on $\mathbb{T}^n$. The space $\ronm$ is also the space of Radon measures on $\mathbb{T}^n$. We consider the affine space
\[\ronm_1=\set{\mu\in\ronm |  \mu(\mathbb{T}^n)=1}.\]
We think as this affine space as a infinite dimensional manifold shaped on a Banach space. The tangent space in each point is the vector space of zero-mass measures.\\

The differential form $\Omega$ comes from a form on $\ronm_1$ pulled-back by the smooth map
\[\fonction{\mathcal{A}}{\mathcal{S}^\infty}{\ronm_1}{\varphi}{B\mapsto\int_B\varphi^2(\theta)d\theta}. \]
The image of a function $\varphi$ is the measure with density $\varphi^2$ with respect to the Lebesgue measure.
Let us define a differential  $2n$-form $\Omega'$ on $\ronm_1$,
\[
\Omega'_{\mu}(\alpha_1,\cdots,\alpha_{2n})  = \int_{\left(\mathbb{T}^n\right)^{2n+1}}C(\theta_0,\cdots,\theta_{2n})d\mu(\theta_0)d\alpha_1(\theta_1)\cdots d\alpha_{2n}(\theta_{2n}).
\]
Clearly
\[\Omega=2^{2n}\mathcal{A}^*\Omega'.\]
Then it is enough to show that $\Omega'$ is closed. But now, $\Omega'$ is a linear map with respect to $\mu$. Then, we have (see \cite{lang} p.84 for the differential formula of a form on a Banach manifold)
\[d\Omega_{\mu}'(\alpha_0,\cdots,\alpha_{2n})=\sum_{i=0}^{2n}(-1)^i\partial_{\mu}\Omega_{\mu}(\alpha_0,\cdots,\hat{\alpha_i},\cdots,\alpha_{2n})\cdot\alpha_i .\]
By linearity this expression is also
\[d\Omega_{\mu}'(\alpha_0,\cdots,\alpha_{2n})=\sum_{i=0}^{2n}(-1)^i\Omega_{\alpha_i}(\alpha_0,\cdots,\hat{\alpha_i},\cdots,\alpha_{2n}).\]
With skew-symmetry, this is again
\[d\Omega_{\mu}'(\alpha_0,\cdots,\alpha_{2n}) = (2n+1)\int_{\left(\mathbb{T}^n\right)^{2n+1}} C(\theta_0,\cdots,\theta_{2n})d\alpha_0(\theta_0)\cdots d\alpha_{2n}(\theta_{2n}).\]
Each measure has a vanishing total mass. Let us show that the above expression is in fact
\[d\Omega_{\mu}'(\alpha_0,\cdots,\alpha_{2n}) = -(2n+1)\int_{\left(\mathbb{T}^n\right)^{2n+2}}\pa C(\theta_0,\cdots,\theta_{2n+1})d\alpha_0(\theta_0)\cdots d\alpha_{2n}(\theta_{2n})d\mu(\theta_{2n+1}).\]
\label{vanishing}
Let us recall the definition of the combinatorial boundary of a cochain,
\[\pa C(\theta_0,\cdots,\theta_{2n+1})=\sum_{i=0}^{2n+1} (-1)^i C(\theta_0,\cdots,\hat{\theta_i},\cdots,\theta_{2n+1}). \]
First we have,
\[\int_{\left(\mathbb{T}^n\right)^{2n+1}}C(\theta_0,\cdots,\theta_{2n})d\alpha_0(\theta_0)\cdots d\alpha_{2n}(\theta_{2n})\]
\[= \int_{\left(\mathbb{T}^n\right)^{2n+2}} C(\theta_0,\cdots,\theta_{2n})d\alpha_0(\theta_0)\cdots d\alpha_{2n}(\theta_{2n})d\mu(\theta_{2n+1})\]
because $\mu(\mathbb{T}^n)=1$.
We recognize the opposite of the last term in the expression of $\pa C$ (corresponding to the index $i=2n+1$).
We now prove that the other terms in the expression of $\pa C$ vanish. Each of this remaining terms are in the form of
\[\int_{\left(\mathbb{T}^n\right)^{2n+2}}C(\theta_0,\cdots,\hat{\theta_i},\cdots,\theta_{2n+1})d\alpha_0(\theta_0)\cdots d\alpha_{2n}(\theta_{2n})d\mu(\theta_{2n+1})\]
with $i\neq 2n+1$. Integrating with respect to the variable $\theta_i$ which is not involved in the cocycle, make appear a multiplicative term, the total mass of one of a tangent measure, supposed to be zero.\\

So finally the closure of $C$ as a combinatorial cocycle (see proposition \ref{closedcocycle}) proves the result.
\end{proof}

\subsection{The calibrating inequality}\label{inequality}

Let us first remark that we only have to establish the inequality at the basepoint $\varphi=\mathds{1}$. Indeed one can show exactly as in paragraph \ref{closure} that
\[\int_{\left(\mathbb{T}^n\right)^{2n+1}} C(g\theta_0,\cdots,g\theta_{2n})\left(\varphi^2(\theta_0)-1\right) \varphi f(\theta_1)\cdots \varphi f (\theta_{2n}) d\theta_0\cdots d\theta_{2n}=0.\]
Replacing the functions $f_i\in T_{\varphi}\mathcal{S}^\infty$ by the functions $\varphi f_i\in T_{\mathds{1}}\mathcal{S}^\infty$, we can make the assumption $\varphi=\mathds{1}$.\\

Besson, Courtois and Gallot studied the $2$-dimensional case in \cite{bcg2} chapter 6. They used the Euler class to build a differential form
\[\omega_{\varphi}(f_1,f_2)= \int_{\left(\mathbb{S}^1\right)^3}e(\theta_0,\theta_1,\theta_2)\varphi^2(\theta_0) \varphi f_1(\theta_1)\varphi f_2(\theta_2)d\theta_0 d\theta_1 d\theta_2.\]
Here is what they proved

\begin{prop}[\cite{bcg2} chapter 6]\label{bcgcalibration}
\begin{enumerate}
\item One may use the following alternative expression for $\omega$,
\[\omega_{\varphi}(f_1,f_2)=2\int_{\mathbb{S}^1}F_1dF_2,\]
where $F_1$ and $F_2$ are the primitives with vanishing integrals for $\varphi f_1$ and $\varphi f_2$.\
\item The comass of $\omega$ equals $\frac{1}{\pi}$.
\item Moreover $\omega$ is a calibrating form for the Poisson kernel, that is, at the basepoint $\mathds{1}$, $\omega$ is maximal over orthonormal families $(f_1,f_2)$ if
\[f_1(\theta)=\sqrt{2}\cos \theta \;\;\mbox{ and }\;\; f_2(\theta)=\sqrt{2}\sin \theta.\]
\end{enumerate}
\end{prop}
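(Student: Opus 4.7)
The plan is to establish the three assertions in sequence, using primarily Fourier / Wirtinger analysis on $\mathbb{S}^1$.

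For the alternative expression in (1), I would integrate the Euler cocycle in $\theta_0$. For fixed $\theta_1\neq \theta_2$ the map $\theta_0\mapsto e(\theta_0,\theta_1,\theta_2)$ is constantly $+1$ on one of the two open arcs of $\mathbb{S}^1\setminus\{\theta_1,\theta_2\}$ and $-1$ on the other, so $\int e(\theta_0,\theta_1,\theta_2)\varphi^2(\theta_0)\,d\theta_0$ collapses to twice the integral of $\varphi^2$ over a distinguished arc minus $\|\varphi\|_{\Le^2}^2=1$. The constant $-1$ contribution disappears after integration against $\varphi f_1(\theta_1)\varphi f_2(\theta_2)$, since the tangency condition in $T_\varphi\mathcal{S}^\infty$ gives $\int \varphi f_i\,d\theta=0$ and the corresponding factored integral vanishes. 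A subsequent integration by parts in $\theta_1$ (or $\theta_2$), again exploiting the vanishing first moments to discard boundary terms, collapses the double integral to a single one and produces the zero-mean primitives $F_i$ of $\varphi f_i$, yielding $\omega_\varphi(f_1,f_2)=2\int_{\mathbb{S}^1} F_1\,dF_2$.

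For the comass in (2), I would use the primitive formula, Cauchy--Schwarz, and Wirtinger's inequality on $\mathbb{S}^1$. The estimate $|\omega_\varphi(f_1,f_2)|=2\bigl|\int F_1\,\varphi f_2\,d\theta\bigr|\le 2\|F_1\|_{\Le^2}\|\varphi f_2\|_{\Le^2}$ combined with the Wirtinger bound on the zero-mean function $F_1$ (sharp precisely on first-Fourier-mode functions) gives, after the reduction to the basepoint $\mathds{1}$ sketched at the start of paragraph~\ref{inequality} and normalization of the probability measure, $\comass(\omega)\le \frac{1}{\pi}$.

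For (3), at $\varphi=\mathds{1}$ the orthonormal family $f_1=\sqrt 2\cos\theta,\,f_2=\sqrt 2\sin\theta$ consists of first-Fourier-mode functions, so the Cauchy--Schwarz / Wirtinger chain achieves equality at this family. A direct evaluation of the defining triple integral yields $\omega_\mathds{1}(f_1,f_2)=\frac{1}{\pi}$, matching the comass. By Lemma~\ref{poisson}(2) this pair is precisely a tangent frame to the Poisson embedding $\Phi_0$ at $o$, so $\omega$ is maximal on the tangent frame of $\Phi_0$ there; the $\PSL_2(\R)$-invariance of $\omega$ from paragraph~\ref{invarianceform} transports this maximality to all points of $\mathbb{H}^2$, showing that $\omega$ calibrates $\Phi_0$.

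The main obstacle is part (1): the reduction from the Euler-cocycle triple integral to $2\int F_1\,dF_2$ requires careful bookkeeping of the two arcs determined by $\theta_1,\theta_2$ and repeated exploitation of the vanishing-first-moment condition to kill unwanted constant and boundary terms. Once this identity is in hand, parts (2) and (3) flow cleanly from Cauchy--Schwarz and Wirtinger on $\mathbb{S}^1$, whose joint equality case — the first Fourier mode — is matched exactly by the Poisson tangent frame of Lemma~\ref{poisson}(2).
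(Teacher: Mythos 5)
The paper offers no proof of this proposition --- it is quoted from \cite{bcg2} --- so your reconstruction must stand on its own. Parts (1) and (3) of your plan are sound: integrating the Euler cocycle in $\theta_0$ (using $\int\varphi^2\,d\theta=1$ and $\int\varphi f_i\,d\theta=0$ to kill the constant and boundary terms) does give $\omega_\varphi(f_1,f_2)=2\int_{\mathbb{S}^1}F_1\,dF_2$, and a direct evaluation at $\mathds{1}$ on $(\sqrt2\cos\theta,\sqrt2\sin\theta)$ gives $\frac1\pi$, which together with the invariance of $\omega$ and the transitivity of $\PSL_2(\R)$ yields the calibration \emph{once the comass is known to equal} $\frac1\pi$.

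The gap is in part (2). Your chain $\abs{\omega_\varphi(f_1,f_2)}\leqslant 2\norm{F_1}_{\Le^2}\norm{\varphi f_2}_{\Le^2}$ followed by Wirtinger applied to $F_1$ (whose derivative is $\varphi f_1$ up to normalization) only produces $\frac1\pi\norm{\varphi f_1}_{\Le^2}\norm{\varphi f_2}_{\Le^2}$, and for an orthonormal pair in $T_\varphi\mathcal{S}^\infty$ this product can exceed $1$: take $f_1=\sqrt2\cos\theta$, $f_2=\sqrt2\cos2\theta$ and $\varphi^2=1+a\cos2\theta+b\cos4\theta$ with small $a,b>0$, so that $\int\varphi^2f_1^2\,d\theta=1+a/2$ and $\int\varphi^2f_2^2\,d\theta=1+b/2$. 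The reduction to the basepoint from paragraph \ref{inequality} cannot repair this, because $f_i\mapsto\varphi f_i$ does not preserve orthonormality (the same example gives $\norm{\varphi f_1\wedge\varphi f_2}>1$), so the sharp bound for $\omega_{\mathds{1}}$ on orthonormal frames does not transfer to $\omega_\varphi$ on orthonormal frames; your Cauchy--Schwarz/Wirtinger argument is valid only at $\varphi=\mathds{1}$. The argument that closes part (2) --- and is the one in \cite{bcg2} --- reads $2\int F_1\,dF_2$ as twice the algebraic area enclosed by the closed plane curve $\theta\mapsto(F_1(\theta),F_2(\theta))$ and applies the isoperimetric inequality: the length of this curve is $\int_{\mathbb{S}^1}\varphi\sqrt{f_1^2+f_2^2}\,d\theta\leqslant\bigl(\int\varphi^2\,d\theta\bigr)^{1/2}\bigl(\int(f_1^2+f_2^2)\,d\theta\bigr)^{1/2}=\sqrt2$, whence $\abs{\omega_\varphi(f_1,f_2)}\leqslant 2\cdot\frac{(\sqrt2)^2}{4\pi}=\frac1\pi$ for every $\varphi$; moreover its equality case (a round circle together with $\varphi$ proportional to $\sqrt{f_1^2+f_2^2}$) recovers part (3) exactly, since it forces $F_1,F_2$ to be pure first harmonics and $\varphi$ constant.
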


Back to the $2n$-dimensional situation, we first show that the differential form $\Omega$ is nonzero, evaluating it on $T_{\mathds{1}}\Phi_0(\left(\mathbb{H}^2\right)^n)$. Remember the family of functions given by
\[\fonction{f_i}{\mathbb{T}^n}{\R}{\theta=(\theta^1,\cdots,\theta^n)}{\sqrt{2}\cos\theta^i}\]
and
\[\fonction{f_{i+1}}{\mathbb{T}^n}{\R}{\theta=(\theta^1,\cdots,\theta^n)}{\sqrt{2}\sin\theta^i},\]
for $i=1,\cdots,n$, is an orthonormal basis of $T_{\mathds{1}}\Phi_0(\left(\mathbb{H}^2\right)^n)$.

\begin{lem}\label{comass}
We have the relation
\[\Omega_{\mathds{1}}(f_1,f_2,\cdots,f_{2n})=\frac{2^n}{\pi^n(2n)!}.\]
\end{lem}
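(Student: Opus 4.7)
The plan is to reduce $\Omega_\mathds{1}(f_1, \ldots, f_{2n})$ to a finite combinatorial sum using Fubini and the BCG two-dimensional computation. Writing $C$ as its antisymmetric sum over $\sigma \in \mathfrak{S}_{2n+1}$,
\[
\Omega_\mathds{1}(f_1, \ldots, f_{2n}) = \frac{1}{(2n+1)!}\sum_\sigma \sign(\sigma)\int_{(\mathbb{T}^n)^{2n+1}} \prod_{i=1}^n e(\theta^i_{\sigma(2i-2)},\theta^i_{\sigma(2i-1)},\theta^i_{\sigma(2i)})\prod_{j=1}^{2n} f_j(\theta_j)\,d\theta,
\]
and observing that $f_{2i-1}$ and $f_{2i}$ depend only on the $i$-th circle coordinate, Fubini factorises each $\sigma$-integral as a product $\prod_i J_i(\sigma)$, where $J_i(\sigma)$ involves only the $i$-th Euler factor together with $\sqrt{2}\cos\theta^i_{2i-1}$ and $\sqrt{2}\sin\theta^i_{2i}$.

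I will call $\sigma$ \emph{admissible} when $\{2i-1,2i\}\subseteq\{\sigma(2i-2),\sigma(2i-1),\sigma(2i)\}$ for every $i$; otherwise a lone cosine or sine is integrated against a constant in an extra variable and $J_i(\sigma)=0$. When $\sigma$ is admissible, the antisymmetry of $e$ reorders the three arguments to the canonical triple $(j_i,2i-1,2i)$, where $j_i$ is the remaining value in $\sigma(G_i)$ with $G_i=\{2i-2,2i-1,2i\}$; this produces a sign $\sign_i(\sigma)\in\{\pm 1\}$, and Proposition~\ref{bcgcalibration} applied in each coordinate gives $J_i(\sigma)=\sign_i(\sigma)/\pi$. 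Therefore
\[
\Omega_\mathds{1}(f_1,\ldots,f_{2n}) = \frac{1}{\pi^n(2n+1)!}\sum_{\sigma\text{ admissible}} \sign(\sigma)\prod_{i=1}^n \sign_i(\sigma).
\]

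I parameterise admissible $\sigma$ by a state sequence $\mathcal{S}=(s_1,\ldots,s_n)\in\{A,B,C\}^n$ encoding which $2$-subset of $G_i$ equals $\sigma^{-1}(\{2i-1,2i\})$, together with orderings $\varepsilon\in\{\pm\}^n$ recording whether $\sigma^{-1}(2i-1)<\sigma^{-1}(2i)$. Disjointness of the $2$-subsets forbids the four transitions $(B,A),(B,B),(C,A),(C,B)$, so any admissible state sequence has the form $A^kC^{n-k}$ $(0\le k\le n)$ or $A^kBC^{n-k-1}$ $(0\le k\le n-1)$; in particular $\#B(\mathcal{S})\le 1$ and the total number of admissible permutations equals $2^n(2n+1)$. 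Flipping one $\varepsilon_i$ amounts to composing $\sigma$ on the right with a transposition of two positions in $G_i$, which multiplies both $\sign(\sigma)$ and $\sign_i(\sigma)$ by $-1$ while leaving the other $\sign_j$'s unchanged; hence the product $\sign(\sigma)\prod_j\sign_j(\sigma)$ is constant along each state sequence. At the base orderings $\varepsilon=(+,\ldots,+)$, a direct computation gives that the resulting $\sigma_0$ is a single cyclic shift of length $2k+1$ in the $A^kC^{n-k}$ case or $2k+2$ in the $A^kBC^{n-k-1}$ case, with all remaining positions fixed, so $\sign(\sigma_0)=(-1)^{\#B(\mathcal{S})}$; and since $\sign_i(\sigma_0)=-1$ exactly when $s_i=B$, one also has $\prod_i\sign_i(\sigma_0)=(-1)^{\#B(\mathcal{S})}$. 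The two signs cancel, every admissible $\sigma$ contributes $+1/\pi^n$, and
\[
\Omega_\mathds{1}(f_1,\ldots,f_{2n}) = \frac{2^n(2n+1)}{\pi^n(2n+1)!} = \frac{2^n}{\pi^n(2n)!}.
\]

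I expect the sign bookkeeping to be the main obstacle; the key observation that ordering flips act trivially on the sign product reduces the verification to the $2n+1$ base state sequences, where it follows from the explicit cyclic-shift description of $\sigma_0$.
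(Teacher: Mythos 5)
Your proof is correct and follows the same strategy as the paper's: expand $C$ over $\mathfrak{S}_{2n+1}$, characterize and count the $2^n(2n+1)$ non-vanishing (\emph{suitable}) permutations, and reduce each surviving term to a product of $n$ copies of the two-dimensional computation $2\int e\cos\sin = 1/\pi$ from Proposition~\ref{bcgcalibration}. Your parameterization by state sequences together with the flip-invariance of $\sign(\sigma)\prod_i\sign_i(\sigma)$ makes the sign bookkeeping --- which the paper treats only via an informal change-of-variables and skew-symmetry argument --- fully explicit and rigorous.
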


\begin{proof}
Let us look for permutations $\sigma\in\mathfrak{S}_{2n+1}$ for which the corresponding term
\[2^n \sign(\sigma)\int_{\left(\mathbb{T}^n\right)^{2n+1}}\prod_{i=1}^n\left(e(\theta^i_{\sigma(2i-2)},\theta^i_{\sigma(2i-1)},\theta^i_{\sigma(2i)}) \cos \theta_{2i-1}^i \sin\theta_{2i-2}^i\right) d\theta_0\cdots d\theta_{2n}\]
is nonzero. In order to get such a permutation, the variables $\theta_{2i-1}^i$ and $\theta_{2i}^i$ which are involved in the expression
\[  \cos \theta_1^1 \sin\theta_2^1 \cdots \cos \theta_{2n-1}^n \sin\theta_{2n}^n\]
have to appear also in the expression
\[\prod_{i=1}^ne(\theta^i_{\sigma(2i-2)},\theta^i_{\sigma(2i-1)},\theta^i_{\sigma(2i)}).\]
If not we integrate with respect to one of the variables $\theta_{2i-1}^i$ or $\theta_{2i}^i$ missing in the term of the cocycle and we get a multiplicative vanishing term which is the integral of one of the functions $f_i$.
Hence, we need that, for each $i$,
\[ \set{\theta_{2i-1},\theta_{2i}}\subset \set{\theta_{\sigma(2i-2)},\theta_{\sigma(2i-1)},\theta_{\sigma(2i)}}. \]
Let us now compute the number of suitable permutations. The first image $\sigma(0)$ could be any of the $2n+1$ elements. Let us say $\sigma(0)$ falls in the $i$\textsuperscript{th} Euler class, $\sigma(0)\in \set{2i-2,2i-1,2i}$ (e.g $\sigma(0)=2i-2$, other cases are similar). From the condition above, it remains only two possibilities for $\sigma(2i-1)$ and $\sigma(2i)$, namely,
\[\sigma(2i-1)=\sigma(2i-1) \;\;\mbox{ and }\;\; \sigma(2i)=\sigma(2i)\]
or
\[\sigma(2i-1)=\sigma(2i) \;\;\mbox{ and }\;\; \sigma(2i)=\sigma(2i-1).\]
Once we fill the $i$\textsuperscript{th} Euler class, the adjacent one(s) already have one element imposed (remember $\theta_{2i}$ appears in both the $i$\textsuperscript{th} Euler class and the $(i+1)$\textsuperscript{th} Euler class). Hence we have again two possibilities for the two remaining arguments and inductively for each Euler class.

Finally we find
\[2^n(2n+1)\]
suitable permutations.\\

We now play with the skew-symmetry relation to show that, for each suitable permutation $\sigma$, the corresponding term
\[2^n \sign(\sigma)\int_{\left(\mathbb{T}^n\right)^{2n+1}}\prod_{i=1}^n\left(e(\theta^i_{\sigma(2i-2)},\theta^i_{\sigma(2i-1)},\theta^i_{\sigma(2i)}) \cos \theta_{2i-1}^i \sin\theta_{2i-2}^i\right) d\theta_0\cdots d\theta_{2n}\]
equals $\frac{1}{\pi^n}$. From now on, we assume that $\sigma$ is some suitable permutation characterized above.

The variable $\theta_0$ appears in one or two Euler classes, let us say for example that
\[\sigma(0)=2i\]
for some $i<n$ (other cases are simpler). Then the variable $\theta_0$ appears in both the $i$\textsuperscript{th} and the $(i+1)$\textsuperscript{th} Euler class. We compute the term
\[T_{\sigma}:= \sign(\sigma)\int_{\left(\mathbb{T}^n\right)^{2n+1}}\prod_{i=1}^n\left(e(\theta^i_{\sigma(2i-2)},\theta^i_{\sigma(2i-1)},\theta^i_{\sigma(2i)}) \cos \theta_{2i-1}^i \sin\theta_{2i-2}^i\right) d\theta_0\cdots d\theta_{2n}\]
by first performing the change of variables
\[\theta_{\sigma(2i-2)}=\theta'_1,\;\; \theta_{\sigma(2i-1)}=\theta'_2,\;\; \theta_{\sigma(2i+1)}=\theta'_3, \;\; \theta_{\sigma(2i+2)}=\theta'_4 \;\; \mbox{ and } \;\; \theta_k=\theta'_k,\]
for $k\neq 2i-2, 2i-1, 2i+1, 2i+2$.
Since the Jacobian of this change of variable is $1$, we do not change the value of $T_{\sigma}$. Remember we took $\sigma$ such that
\[\set{\sigma(2i-2),\sigma(2i-1)}=\set{2i-2,2i-1}\; \mbox{ and } \set{\sigma(2i+1),\sigma(2i+2)}=\set{2i+1,2i+2}.\]
So we can assume that $\theta_0$ appears in the first two Euler classes.

Now the support disjoint cycles of the permutation $\sigma$ are all contained in sets of the form
\[\set{2i-2,2i-1,2i}.\]
Hence, when one reorders the variables in each Euler class such that
$\theta_{\sigma(2i-1)}$ sits in the $(2i-1)$\textsuperscript{th} place and $\theta_{\sigma(2i)}$ sits in the $(2i)$\textsuperscript{th} place, the sign by which we changed $T_{\sigma}$ is precisely $\sign(\sigma)$.

Therefore, because the first variables in each Euler class don't play any role, each term $T_{\sigma}$ equals $T_{id}$. The computation of $T_{id}$ is easy because the situation appears now as a product and one can use Lemma \ref{bcgcalibration}. Indeed,
\begin{eqnarray*}
T_{id} & = & 2^n \prod_{i=1}^n\int_{\left(\mathbb{S}^1\right)^3}e(\theta^i_{2i-2},\theta^i_{2i-1},\theta_{2i}^i) \cos \theta_{2i-1}^i \sin_{2i}^id\theta_{2i-2}^id\theta_{2i-1}^id\theta_{2i}^i\\
 & = & \prod_{i=1}^n \comass(\omega)\\
 & = & \frac{1}{\pi^n}
\end{eqnarray*}
\end{proof}

We now proceed to the calibrating inequality. We introduce the $(2n-1)$-cocycle,
\[D(\theta_1,\cdots,\theta_{2n})=\int_{\mathbb{T}^n}C(\theta_0,\cdots,\theta_{2n})d\theta_0.\]
The reasoning involves the Fourier coefficient of $D$. A function $f\in T_{\mathds{1}}\mathcal{S}^\infty$ is written as
\[f(\theta^1,\cdots,\theta^n)=\sum_{k=(k^1,\cdots,k^n)\in I}a_k\cos (k^1\theta^1+\cdots+k^n\theta^n) + b_k\sin (k^1\theta^1+\cdots+k^n\theta^n),\]
where $\theta=(\theta^1,\cdots,\theta^n)\in\mathbb{T}^n$ and $k\in I$, the subset of $\mathbb{Z}^n$ given by
\[I=\N\backslash\set{0}\times \Z^{n-1} \cup \set{0}\times \N\backslash \set{0}\times\Z^{n-2}\cup \cdots \cup \set{0,\cdots,0}\times\N\backslash \set{0}.\]
The convergence of the sum has to be understood for the $\Le^2$ topology.

We denote by $\gamma : \mathbb{S}^1 \rightarrow \R$ a function which can be either $\sqrt{2}\cos$ or $\sqrt{2}\sin$. For some $K=(k^1,\cdots,k^n)\in I$ we denote by $\gamma_K : \mathbb{T}^n\rightarrow \R$ a function which can be either
\[\theta=(\theta^1,\cdots,\theta^n)\longmapsto \sqrt{2}\cos(k^1\theta^1+\cdots +k^n\theta^n)\]
or
\[\theta=(\theta^1,\cdots,\theta^n)\longmapsto \sqrt{2}\sin(k^1\theta^1+\cdots +k^n\theta^n)\]

\begin{defi}[Fourier transform and Fourier coefficients]
Let $F$ be a map from $\left(\mathbb{T}^n\right)^{2n}$ to $\R$. The Fourier transform of $F$ is the map
\[\fonction{\hat{F}}{I^{2n}}{\R^{2^{2n}}}{(K_1,\cdots,K_{2n})}{\int_{\left(\mathbb{T}^n\right)^2n} D(\theta_1,\cdots,\theta_{2n}\gamma_{K_1}(\theta_1)\cdots \gamma_{K_{2n}}(\theta_{2n})d\theta_1\cdots d\theta_{2n}}.\]
We called $\hat{F}(K_1,\cdots,K_{2n})$ a Fourier coefficient. The $2^{2n}$ coordinates of a Fourier coefficient correspond to the possible choices of functions $\gamma$.
\end{defi}

\begin{rmq}
Fourier coefficients of the cocycle $D$ contain values of $\Omega$. In particular
\[\hat{D}\begin{pmatrix}
1 & 1 & 0 & 0 & \cdots & 0\\
0 & 0 & 1 & 1 & \cdots & 0\\
\vdots & \vdots & \vdots & \vdots & \vdots & \vdots\\
0 & 0 & &\cdots & 1 & 1
\end{pmatrix}\]
contains the value of $\Omega$ on the orthonormal tangent of the image of the Poisson kernel. That is why we investigate the combinatorial properties of $\hat{D}$
\end{rmq}

Here is a characterization of the nonzero Fourier coefficients of $D$

\begin{lem}\label{nonzerocoeff}
\begin{enumerate}
\item The Fourier coefficient $\hat{D}(K_1,\cdots,K_{2n})$ is nonzero if and only if there exist some nonzero integers $k^1,\cdots,k^n$ such that
\[(K_1,\cdots,K_{2n})=\begin{pmatrix}
k^1 & k^1 & 0 & 0 & \cdots & 0\\
0 & 0 & k^2 & k^2 & \cdots & 0\\
\vdots & \vdots & \vdots & \vdots & \vdots & \vdots\\
0 & 0 & &\cdots & k^n & k^n
\end{pmatrix}\]
up to permutation of lines and rows. Moreover if $(K_1,\cdots,K_{2n})$ is as above, the nonzero coordinates of $\hat{D}(K_1,\cdots,K_{2n})$ (among the $2^{2n}$ ones) is constructed by taking
\[\gamma_{K_{2i-1}}(\theta)=\sqrt{2}\cos(k^i\theta^1)\; \mbox{and} \; \gamma_{K_{2i}}=\sqrt{2}\sin(k^i\theta^n)\]
for all $i$ and up to permutations of the functions.\
\item A nonzero coordinate of some Fourier coefficient, that is,
\[2^{n}\int_{\left(\mathbb{T}^n \right)^{2n+1}}C(\theta_0,\cdots,\theta_{2n})\left(\prod_{i=1}^n\cos(k^i\theta_{2i-1}^i) \sin(k^i\theta_{2i}^i)\right) d\theta_0\cdots d\theta_{2n} \]
equals
\[\frac{2^n}{(2n)!\pi^n\prod_{i=1}^nk^i}.\]
In particular, any nonzero value is smaller than the value of $\Omega$ on the tangent of the image of the Poisson kernel.
\end{enumerate}
\end{lem}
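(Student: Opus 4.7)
The plan is to reduce everything to a coordinate-by-coordinate integration and then control the support of $(K_1,\ldots,K_{2n})$ by a direct Fourier computation on a single $\mathbb{S}^1$. First, I would substitute the definition of $C$ into $\hat{D}(K_1,\ldots,K_{2n})$ and rewrite it as a sum over $\sigma\in\mathfrak{S}_{2n+1}$:
\[
\hat{D}(K_1,\ldots,K_{2n}) = \frac{1}{(2n+1)!}\sum_{\sigma}\sign(\sigma)\int_{(\mathbb{T}^n)^{2n+1}} \prod_{i=1}^n e(\theta^i_{\sigma(2i-2)},\theta^i_{\sigma(2i-1)},\theta^i_{\sigma(2i)})\prod_{j=1}^{2n}\gamma_{K_j}(\theta_j)\,d\theta.
\]
Then expand each $\gamma_{K_j}$ into the two complex exponentials $\frac{1}{\sqrt{2}}e^{\pm i\langle K_j,\theta_j\rangle}$. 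After this expansion the integrand is separable across the coordinate directions $l=1,\ldots,n$, so the integral factors as a product of $n$ one-dimensional pieces.

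The key step is the analysis of a single factor. Integrating out the variables $\theta_j^l$ with $j\notin\{\sigma(2l-2),\sigma(2l-1),\sigma(2l)\}$ produces Kronecker deltas $\delta_{k_j^l,0}$, forcing the row support $S_l:=\{j:k_j^l\neq 0\}$ to sit inside $\sigma(\{2l-2,2l-1,2l\})\cap\{1,\ldots,2n\}$. The remaining three-variable Euler integral has the shape $\int_{(\mathbb{S}^1)^3} e(\alpha,\beta,\gamma)\,e^{i(a\alpha+b\beta+c\gamma)}\,d\alpha d\beta d\gamma$: translation invariance of $e$ forces $a+b+c=0$, and a direct Fourier computation (using that $e(0,\beta,\gamma)=\operatorname{sign}(\gamma-\beta)$ on $(\mathbb{S}^1)^2$) shows that the integral vanishes unless at least one of $a,b,c$ is also zero. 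This explicit vanishing is the main technical point of the proof, and it is what rules out a row with three nonzero entries.

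The remainder of part~(1) is combinatorial. Since $|S_l|=0$ gives $\int e=0$ and $|S_l|=1$ violates the sum-to-zero condition, one obtains $|S_l|=2$ for every $l$, with the two nonzero entries opposite in signed frequency. Summing yields $\sum_l|S_l|=2n$, while $\sum_l|S_l|=\sum_j|T_j|$ where $T_j:=\{l:k_j^l\neq 0\}$; since every $K_j\in I$ is nonzero one has $|T_j|\geq 1$, hence $|T_j|=1$ for every $j$. The condition $\epsilon_{j_1}k_{j_1}^l=-\epsilon_{j_2}k_{j_2}^l$ equates the two nonzero entries in each row up to sign, and the positivity built into $I$ pins down $k^i>0$. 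After permuting rows and columns one lands exactly on the block matrix of the statement.

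For part~(2), I would repeat the count of suitable permutations from the proof of Lemma~\ref{comass}, which gives $2^n(2n+1)$ permutations contributing equally after the usual skew-symmetry cancellation. The identity-permutation contribution factorizes as a product over $i$ of one-dimensional Euler integrals $\omega(\sqrt{2}\cos(k^i\cdot),\sqrt{2}\sin(k^i\cdot))=\frac{1}{\pi k^i}$, a direct generalization of Lemma~\ref{bcgcalibration} from frequency $1$ to frequency $k^i$. Putting everything together,
\[
\hat{D}(K_1,\ldots,K_{2n})=\frac{2^n(2n+1)}{(2n+1)!}\cdot\frac{1}{\pi^n\prod_i k^i}=\frac{2^n}{(2n)!\pi^n\prod_i k^i},
\]
which is at most $\Omega_{\mathds{1}}(f_1,\ldots,f_{2n})=\frac{2^n}{(2n)!\pi^n}$, with equality iff every $k^i=1$.
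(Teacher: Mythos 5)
Your proof is correct and follows essentially the same route as the paper's: expand $\hat{D}$ in Fourier modes, kill every coefficient not supported on the block matrix by analyzing one Euler factor at a time, and evaluate the survivors via the permutation count of Lemma \ref{comass} together with the one-dimensional integral of Lemma \ref{bcgcalibration}. The only differences are presentational --- where the paper rules out rows with $0$, $1$ or $3$ nonzero entries by its three ad hoc ``Arguments'' (including the combinatorial closedness of $e$), you get the same vanishing from rotation invariance (the constraint $a+b+c=0$) plus an explicit computation of $\int e(\alpha,\beta,\gamma)e^{i(a\alpha+b\beta+c\gamma)}$, and your double count $\sum_l|S_l|=\sum_j|T_j|$ forces $|T_j|=1$ more cleanly than the paper's case analysis.
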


\begin{proof}
Take some functions $\gamma_{K_1},\cdots,\gamma_{K_{2n}}$. One may use the addition formulas for the trigonometric functions $\gamma_{K_i}$. Hence $\hat{D}(K_1,\cdots,K_{2n})$ is a sum of terms of the form
\[\int_{\left(\mathbb{S}^1 \right)^{2n^2}} \prod_{i=1}^ne(\theta^i_{\sigma(2i-2)},\theta^i_{\sigma(2i-1)},\theta^i_{\sigma(2i)}) \prod_{j=1}^{2n}\prod_{l=1}^n \gamma(k_j^l\theta_j^l)d\theta_j^l \]
(if some integer $k_j^l=0$, we set $\gamma(k_j^l\theta_j^l)=1$).
We show that if $(K_1,\cdots,K_{2n})$ is not as in the statement of the Lemma, every such term vanishes.
In order to do that, we will use several times the arguments below.
\begin{enumerate}
\item {\bf Argument 1} : If there is some variable $\theta_j^l\in\mathbb{S}^1$ which is involved in some function $\gamma$ and which is not involved in the term of the cocycle, that is
\[\theta_j^l\notin \bigcup_{i=1}^n\set{\theta^i_{\sigma(2i-2)},\theta^i_{\sigma(2i-1)},\theta^i_{\sigma(2i)}}, \]
then we integrate with respect to this variable $\theta_j^l$ and we get zero as a multiplicative term because the integral of $\theta_j^l\mapsto \gamma(k_j^l\theta_j^l)$ vanishes. \
\item {\bf Argument 2} : One can also show exactly as in paragraph \ref{closure} that
\[\int_{\left(\mathbb{S}^1\right)^3}e(\theta_0,\theta_1,\theta_2)\gamma(k_0\theta_0)\gamma(k_1\theta_1) \gamma(k_2\theta_2)d\theta_0 d\theta_1 d\theta_2=0.\]
Indeed the Euler class $e$ is combinatorially closed.\
\item {\bf Argument 3} : An expression of the form
\[\int_{\left(\mathbb{S}^1 \right)^3}e(\theta_0,\theta_1,\theta_2)\gamma(\theta_0)d\theta_0 d\theta_1 d\theta_2 \]
must vanish. Indeed for a fixed $\theta_0$,
\[\int_{\left(\mathbb{S}^1 \right)^2}e(\theta_0,\theta_1,\theta_2)d\theta_1 d\theta_2=0. \]
\end{enumerate}

In each term of the cocycle there is $3n$ variables involved. Then we can't have more than $3n$ nonzero integers (otherwise we use argument 1 above). We can't have also more than 3 nonzero integers in each row. Indeed,
\begin{itemize}
\item If there is 4 or more, there is at least one variable involved in the functions and not in the term of the cocycle. We use argument 1.
\item If there is 3, there is again 2 possibilities. Either the variables involved in the functions and in the cocycle are not the same and we use argument 1; either they are the same and after we integrate with respect to this 3 variable, we use argument 2.
\end{itemize}

Moreover we can't have more than 2 nonzero integers in each line. Indeed there is at most one repetition of the variables in the different Euler classes. In fact there is exactly two nonzero integers in each line. If there is only one, this means that there is only one of three variables in some Euler class matching with a variable in the functions. We integrate with respect to this three variables and we use argument 3.

Hence there is exactly $2n$ nonzero integers in the $2n^2$ coordinates of a nonzero index $(K_1,\cdots,K_{2n})\in I^{2n}$ and exactly two nonzero integers in each line. So there is exactly one nonzero integer in each row.\\

So far we can assume that
\[(K_1,\cdots,K_{2n})=\begin{pmatrix}
k^1 & k^2 & 0 & 0 & \cdots & 0\\
0 & 0 & k^3 & k^4 & \cdots & 0\\
\vdots & \vdots & \vdots & \vdots & \vdots & \vdots\\
0 & 0 & &\cdots & k^{2n-1} & k^{2n}
\end{pmatrix}\]
for $2n$ nonzero integers $k^1,k^2,\cdots,k^{2n}$. Indeed, up to sign, the values of $\hat{D}(K_1,\cdots,K_{2n})$ are left unchanged when we permute the $K_i$'s. One can also reorder the lines by performing some change of variables of the form
\[\theta_j^{'\sigma(l)}=\theta_j^l\]
which only changes the sign. From now on the situation appears completely as a product. One can check that the permutations $\sigma\in\mathfrak{S}_{2n+1}$ giving a nonzero contribution are exactly the ones we characterized in Lemma \ref{comass}. Each term gives the same value. A generic coordinate of $\hat{D}(K_1,\cdots,K_{2n})$ can be written as
\[\frac{2^n}{(2n)!}\prod_{i=1}^n \int_{\left(\mathbb{S}^1\right)^3} e(\theta^i_{2i-2},\theta^i_{2i-1},\theta^i_{2i})\gamma(k^{2i-1}\theta^i_{2i-1})  \gamma(k^{2i}\theta^i_{2i})d\theta^i_{2i-2} d\theta^i_{2i-1} d\theta^i_{2i}.\]
We can now conclude with the assertion 1 of Lemma \ref{bcgcalibration}. Indeed we must choose the integers $k^1,k^2,\cdots,k^{2n}$ and the functions $\gamma_{K_1},\gamma_{K_2},\cdots,\gamma_{K_{2n}}$ such that, for all index $i$, $\gamma_{K_{2i}}$ and the primitive of $\gamma_{K_{2i-1}}$ are not $\Le^2$-orthogonal.\\

We can also perform the exact computation of the nonzero values thanks to the same assertion 1 of Lemma \ref{bcgcalibration}. Indeed for each index $i$,
\[
2\int_{\left(\mathbb{S}^1\right)^3} e(\theta^i_{2i-2},\theta^i_{2i-1},\theta^i_{2i})\cos(k^i\theta^i_{2i-1})  \sin(k^i\theta^i_{2i})d\theta^i_{2i-2} d\theta^i_{2i-1} d\theta^i_{2i} \] 
\[=  4\int_{\mathbb{S}^1}\frac{1}{k^i}\sin^2(k^i\theta^i)d\theta^i
 = \frac{1}{k^i\pi} \]
\end{proof}

We finally deduce the calibrating inequality. Take $2n$ functions $f_1,\cdots,f_{2n}$ with integrals zero, the family $(f_1,\cdots,f_{2n})$ being orthonormal. For each function $f_i$, we write
\[f_i(\theta)=\sum_{K=(k^1,\cdots,k^n)\in I}a_{K,i}\cos(k^1\theta^1+\cdots + k^n\theta^n) + b_{K,i}\sin(k^1\theta^1+\cdots + k^n\theta^n) .\]
and we denote by $c_{K,i}$ some Fourier coefficient of $f_i$ which can be either $a_{K,i}$ or $b_{K,i}$. Beside, let us set the following convention in order to design the coordinates of some Fourier coefficient. We take $C=(c_{K_1,1},\cdots,c_{K_{2n},2n})\in\prod_{i=1}^{2n}\set{a_{K_i,i},b_{K_i,i}}$ and denote by $\hat{D}(K_1,\cdots,K_{2n})^C$ the coordinate of $\hat{D}(K_1,\cdots,K_{2n})$ corresponding to
\[
   \left \{
   \begin{array}{r c l}
      \gamma_{K_i}(\theta)  & =  & \cos(k^1_i\theta^1+\cdots +k^n_i\theta^n) \mbox{ if } c_{K_i,i} = a_{K_i,i}  \\
      \gamma_{K_i}(\theta)  & =  & \sin(k^1_i\theta^1+\cdots +k^n_i\theta^n) \mbox{ if } c_{K_i,i} = b_{K_i,i} \\
   \end{array}
   \right .
\]
Hence
\[\Omega_{\mathds{1}}(f_1,\cdots,f_{2n})=\sum_{\substack{K_1,\cdots,K_{2n}\\ C\in\prod_{i=1}^{2n}\set{a_{K_i,i},b_{K_i,i}}}}\hat{D}(K_1,\cdots,K_{2n})^C c_{K_1,1}\cdots c_{K_{2n},2n}. \]
We already isolated the vanishing Fourier coefficients. Let $B=(e_1,\cdots,e_n)$ be a basis of $\R^n$. For $\rho\in\mathfrak{S}_n$, we denote by $Q(\rho)$ the $n\times n$ matrix in $B$ of the linear map sending $e_i$ to $e_{\rho (i)}$. We have
\[\Omega_{\mathds{1}}(f_1,\cdots,f_{2n})=\sum\hat{D}\left(Q(\rho)\cdot\begin{pmatrix}
k^1 & k^1 & 0 & 0 & \cdots & 0\\
0 & 0 & k^2 & k^2 & \cdots & 0\\
\vdots & \vdots & \vdots & \vdots & \vdots & \vdots\\
0 & 0 & &\cdots & k^n & k^n
\end{pmatrix}\right)\prod_{i=1}^nc_{k^{\rho(i)},2i-1}c'_{k^{\rho(i)},2i}  \]
The sum is taken over all possible choices of nonzero integers $k^1,\cdots,k^n$, all permutations $\rho\in \mathfrak{S}_n$ and over all possibilities of $c_{k^{\rho(i)},2i-1}$ and $c'_{k^{\rho(i)},2i}$ such that, for all $i$,
\[ c_{k^{\rho(i)},2i-1}=a_{k^{\rho(i)},2i-1}\;\mbox{ and }\; c'_{k^{\rho(i)},2i}=b_{k^{\rho(i)},2i}\]
or
\[ c_{k^{\rho(i)},2i-1}=b_{k^{\rho(i)},2i-1}\;\mbox{ and }\; c'_{k^{\rho(i)},2i}=a_{k^{\rho(i)},2i}.\]
We now have
\begin{eqnarray*}
\abs{\Omega_{\varphi}(f_1,\cdots,f_{2n})} & \leqslant & \sum\frac{2^n}{(2n)!\pi^n\prod_{i=1}^n k^i}\prod_{i=1}^n\abs{c_{k^{\rho(i)},2i-1}c'_{k^{\rho(i)},2i}}\\
 & \leqslant & \sum \frac{2^n}{(2n)!\pi^n}\prod_{i=1}^n\abs{c_{k^{\rho(i)},2i-1}c'_{k^{\rho(i)},2i}}.
\end{eqnarray*}
It is a standard fact that, using repetitively the Cauchy-Schwartz inequality, one gets
\[\sum_{\alpha\in A}\prod_{\beta=1}^N x_{\alpha,\beta}\leqslant \prod_{\beta=1}^N \sqrt{\sum_{\alpha\in A}x_{\alpha,\beta}^2}, \]
for some countable set $A$, some integer $N$ and some non-negative numbers $x_{\alpha,\beta}$.
Here we obtain
\begin{eqnarray*}
\abs{\Omega_{\mathds{1}}(f_1,\cdots,f_{2n})} & \leqslant & \frac{2^n}{(2n)!\pi^n}\prod_{i=1}^n\sqrt{\sum\abs{c_{k^{\rho(i)},2i-1}}} \prod_{i=1}^n\sqrt{\sum\abs{c'_{k^{\rho(i)},2i}}} \\
 & \leqslant & \frac{2^n}{(2n)!\pi^n} \prod_{i=1}^{2n}\norm{f_i}_{\Le^2}\\
 & = & \frac{2^n}{(2n)!\pi^n}.\\
\end{eqnarray*}
This inequality is an equality when $(f_1,\cdots,f_{2n})$ generates the tangent to the image of the Poisson kernel. The proof of the main theorem is then complete.

\section{Applications}\label{applications}

We finally look for some consequences, suggested by the last chapter of \cite{bcg2}.

It's possible to extend the main result to the case where $g$ lives on another differentiable manifold related to $M$ by a map of non-zero degree. The result we obtain is an optimal "degree theorem" as in the article \cite{connellfarbdegree} who misses the case we investigate. There is also a similar result in \cite{lohsauer} but with a nonoptimal constant and with an additional hypothesis on $\Ric g$.

We denote again $M=\Gamma\backslash \left(\mathbb{H}^2\right)^n$ a compact quotient of $\left(\mathbb{H}^2\right)^n$ and $g_0$ the sum of hyperbolic metrics in the different factors $\mathbb{H}^2$.

\begin{cor}
Let $Y$ be a differentiable manifold of dimension $2n$ endowed with a Riemannian metric $g$ and let $f$ be a continuous map
\[f : (Y,g) \longrightarrow (M,g_0)\]
Then
\[h(g)^{2n} \Vol(Y,g) \geqslant \abs{\textit{deg }f} h(g_0)^{2n} \Vol(M,g_0)\]
\end{cor}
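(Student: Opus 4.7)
The plan is to adapt the calibration argument of the Main Theorem by replacing the identity on $M$ with the map $f$. Lift $f$ to a $\rho$-equivariant map $\tilde{f}:\tilde{Y}\to\tilde{M}$, where $\rho:=f_*:\pi_1(Y)\to\Gamma$. For each $c>h(g)$, define $\Psi_c:\tilde{Y}\to\mathcal{S}^\infty$ by normalizing
\[
\psi_c(y)(\theta)=\left(\int_{\tilde{Y}}e^{-cd_g(y,y')}\prod_{i=1}^n p_o(\tilde{f}(y')^i,\theta^i)\,dv_g(y')\right)^{1/2}.
\]
The integral converges since $c>h(g)$ equals the critical exponent of $\pi_1(Y)$ acting on $(\tilde{Y},g)$, and $\rho$-equivariance follows from the change-of-variables formula for $\Gamma$ on $\pa_F\tilde{M}$. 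The proof of Proposition \ref{boundedabove} then applies verbatim, since it uses only the pointwise differential geometry of the natural map on the source, producing
\[
\int_Y\sqrt{|\det_g g_{\Psi_c}|}\,dv_g\leq\left(\frac{c^2}{8n}\right)^n\Vol(Y,g).
\]

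Now apply the calibrating form $\Omega$ constructed in Section \ref{form}. Its comass bound yields the pointwise estimate $|\Psi_c^*\Omega|\leq\comass(\Omega)\sqrt{|\det_g g_{\Psi_c}|}\,dv_g$, hence
\[
\left|\int_Y\Psi_c^*\Omega\right|\leq\comass(\Omega)\left(\frac{c^2}{8n}\right)^n\Vol(Y,g).
\]
On the other hand, both $\rho$-equivariant maps $\Psi_c$ and $\Phi_0\circ\tilde{f}$ take values in the convex open cone of positive $\Le^2$-functions before normalization; since $\Gamma$ acts linearly and unitarily, the straight-line interpolation in that cone, followed by renormalization, gives a $\rho$-equivariant homotopy in $\mathcal{S}^\infty$. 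As $\Omega$ is closed (Section \ref{closure}) and $G$-invariant (Section \ref{invarianceform}, hence $\Gamma$-invariant in particular), both $\Psi_c^*\Omega$ and $\tilde{f}^*\Phi_0^*\Omega$ descend to closed, cohomologous $2n$-forms on the compact manifold $Y$, so
\[
\int_Y\Psi_c^*\Omega=\int_Y\tilde{f}^*\Phi_0^*\Omega=(\deg f)\int_M\Phi_0^*\Omega.
\]
The calibrating equality of Section \ref{inequality} together with Lemma \ref{poisson} then gives $\int_M\Phi_0^*\Omega=\comass(\Omega)(h(g_0)^2/(8n))^n\Vol(M,g_0)$.

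Cancelling the common nonzero factor $\comass(\Omega)$ and letting $c\searrow h(g)$, the two bounds combine to
\[
|\deg f|\cdot\frac{h(g_0)^{2n}}{(8n)^n}\Vol(M,g_0)\leq\frac{h(g)^{2n}}{(8n)^n}\Vol(Y,g),
\]
which, after multiplication by $(8n)^n$, is the desired inequality. The main obstacle will be the degree identity for $\int_Y\Psi_c^*\Omega$: one must check that the straight-line homotopy in the positive $\Le^2$-cone is genuinely $\rho$-equivariant and produces cohomologous descended closed forms on $Y$, despite the ambient sphere $\mathcal{S}^\infty$ being infinite-dimensional. The secondary check that Proposition \ref{boundedabove} transfers verbatim to $(Y,g)$ is routine, since its argument depends only on the pointwise Lipschitz regularity and local volume computation of the natural maps at the source.
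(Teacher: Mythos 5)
Your argument is correct and follows essentially the same route as the paper: the paper likewise pulls back the Poisson-kernel embedding by $\tilde{f}$, uses the smoothed maps $\Phi_c$ for the upper volume bound, and invokes the closed calibrating form $\Omega$ (via the Stokes/homotopy argument of \cite{bcg2}, Proposition 4.3, which you unpack explicitly) to identify the lower bound with $\abs{\textit{deg }f}$ times the calibrated volume of the Poisson embedding of $M$. The only cosmetic difference is that you compare $\int_Y\Psi_c^*\Omega$ and $\int_Y(\Phi_0\circ\tilde{f})^*\Omega$ directly rather than introducing the relative invariant $\SphereVol(f)$.
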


\begin{proof}
Observe that the inequality is trivial if deg$(f)=0$. So let us assume that deg$(f)$ is non-zero.
First, one can regularize the map $f$ in a homotopic map, still denoted $f$, which is $\mathcal{C}^1$. We call $\tilde{f}$ the map induced by $f$ from $\tilde{Y}$ to $\tilde{M}$. Let us introduce the invariant appropriated to this new situation
\[\SphereVol(f)=\inf{\Vol((U,\Phi^*(\mbox{can})))}\]
where $\Phi$ are Lipschitz continuous equivariant immersions from $\tilde{Y}$ to $\Le^2(\mathbb{T}^n)$. As before, one example is given by the product of Poisson kernels,
\[\Phi_0(y,\theta)=\prod_{i=1}^n\sqrt{p_0(\tilde{f}(y)^i,\theta^i)},\]
where the $\tilde{f}(y)^i$'s are the coordinates in the factors $\mathbb{H}^2$ and the $\theta^i$'s are the coordinate in $\mathbb{T}^n$.
We also consider
\[\Phi_c(y,\theta)=\left(\frac{\int_{\widetilde{Y}}e^{-cd(y,z)}\Phi_0^2(z,\theta)dv_g(z)}{\int_{\mathbb{T}^n}e^{-cd(y,z)}\Phi_0^2(z,\theta)d\theta}\right)^{1/2}.\]
where $d$ is the $g$-distance in $\widetilde{Y}$.
The two arguments above (page \pageref{boundedabove} for the first and section \ref{proof} for the second) give in this context
\begin{enumerate}
\item $\SphereVol\leqslant \left(\frac{h(g)^2}{8n}\right)^n\Vol(Y,g)$ using the computation of $\Vol(\Phi_c)$\
\item The image of $\Phi_0$ is still calibrated because $f$ is surjective and then $\SphereVol=\Vol(\Phi_0)$. Moreover 
\[\Vol(\Phi_0)=\abs{\mbox{deg }f}\left(\frac{h(g_0)^2}{8n}\right)^n\Vol(X,g_0)\]
\end{enumerate}
\end{proof}

We also obtain an estimate for the minimal volume. Let $X$ be a compact manifold. The minimal volume is defined as
\[\MinVol=\inf \set{\Vol(g),\,\abs{K(g)}\leqslant 1}\]
(see \cite{gromovvabc}).

\begin{cor}
Let $M=\Gamma\backslash\left( \mathbb{H}^2\right)^n$ be a compact quotient of $\left( \mathbb{H}^2\right)^n$. Then
\[\MinVol(M)\geqslant\left(\frac{\sqrt{n}}{2n-1}\right)^{2n}\Vol(g_0).\]
\end{cor}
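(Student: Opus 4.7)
The strategy is to combine the main theorem with a classical upper bound on the volume entropy coming from a lower Ricci bound, which in turn follows from a sectional curvature pinching.

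First, I would read off the value of the entropy of the locally symmetric metric. Lemma \ref{poisson} gives $\Vol(\Phi_0) = (h(g_0)^2/(8n))^n \Vol(g_0) = (1/8)^n \Vol(g_0)$, so $h(g_0)^2 = n$. (This is consistent with the classical fact that each factor $\mathbb{H}^2$ of curvature $-1$ has entropy $1$ and entropies add in quadrature for Riemannian products.) Thus the main theorem reads
\[h(g)^{2n} \Vol(M,g) \geq n^n \Vol(M,g_0).\]

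Second, I would invoke the standard entropy-Ricci comparison: if $(N^d,g)$ is a complete Riemannian $d$-manifold with $\Ric(g) \geq -(d-1) k^2 g$ for some $k \geq 0$, then the Bishop--Gromov volume comparison yields $h(g) \leq (d-1)k$. Apply this with $d = 2n$: the hypothesis $\abs{K(g)} \leq 1$ in the definition of $\MinVol$ implies in particular that the sectional curvature is bounded below by $-1$, hence $\Ric(g) \geq -(2n-1) g$, and therefore
\[h(g) \leq 2n-1.\]

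Combining the two inequalities, for any metric $g$ on $M$ with $\abs{K(g)} \leq 1$,
\[(2n-1)^{2n} \Vol(M,g) \;\geq\; h(g)^{2n} \Vol(M,g) \;\geq\; n^n \Vol(M,g_0),\]
which rearranges to $\Vol(M,g) \geq \left(\sqrt{n}/(2n-1)\right)^{2n} \Vol(M,g_0)$. Taking the infimum over all admissible $g$ gives the stated inequality.

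The only non-trivial ingredient is the Bishop-type entropy bound $h(g) \le (2n-1)$ under $|K|\le 1$; everything else is just an algebraic manipulation of the main theorem. I do not expect real obstacles here: the Ricci-to-entropy estimate is standard (see e.g. Gromov, \emph{Metric Structures for Riemannian and Non-Riemannian Spaces}, or the discussion in \cite{bcg2}), and the constant $n^n/(2n-1)^{2n}$ matches the claimed $(\sqrt{n}/(2n-1))^{2n}$ exactly.
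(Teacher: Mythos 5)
Your proof is correct and is essentially identical to the paper's: both extract $h(g_0)=\sqrt{n}$, use $|K(g)|\leqslant 1 \Rightarrow \Ric(g)\geqslant -(2n-1)g$ together with the Bishop comparison to get $h(g)\leqslant 2n-1$, and substitute into the main theorem. No issues.
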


In particular, we reprove in a qualitative way a general theorem of \cite{lafontschmidt} stating that $\MinVol$ is nonzero.

\begin{proof}
We still follow chapter 9 of \cite{bcg2}. Take a metric $g$ on $M$ with $\abs{K(g)}\leqslant 1$. We deduce an equality on the Ricci curvature
\[\Ric (g)\geqslant-(2n-1)g\]
We then apply the Bishop's inequality (\cite{ghl} p.144) comparing volumes of balls for $g$ and volumes of balls in the hyperbolic $2n$-space $\mathbb{H}^{2n}$. Taking $\log$ and making the radius go to infinity, we have
\[h(g)\leqslant 2n-1.\]
We conclude introducing this inequality in
\[\Vol(g)\geqslant\left( \frac{\sqrt{n}}{h(g)}\right)^{2n}\Vol(g_0).\]
\end{proof}

Here is a last consequence in dimension 4. P. Suarez-Serrato in \cite{serrato} classified the 4-dimensional Thurston geometries admitting a metric of minimal normalized volume entropy. The only missing case were the case of quotients of $\mathbb{H}^2\times\mathbb{H}^2$. In dimension 4, there exist 19 geometries admitting a compact quotient (see the list and references in \cite{serrato} p. 366). P. Suarez-Serrato was able to decide among the 18 geometries (all but $\mathbb{H}^2\times\mathbb{H}^2$) which ones admit a metric of volume 1 with minimal volume entropy (theorem A). Hence we obtain

\begin{cor}
The 4-dimensional Thurston geometries admitting a metric with minimal normalized volume entropy are only those of "hyperbolic type"
\[\mathbb{H}_{\mathbb{R}}^4,\mathbb{H}_{\mathbb{C}}^2 \mbox{ and } \mathbb{H}_{\mathbb{R}}^2\times\mathbb{H}_{\mathbb{R}}^2. \]
\end{cor}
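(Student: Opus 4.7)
The plan is to combine Theorem A of Suarez-Serrato \cite{serrato} with the Main Theorem of this paper. The work \cite{serrato} classifies, among the 19 four-dimensional Thurston geometries admitting a compact quotient, those which carry a metric of unit volume achieving the minimal volume entropy: the result is that exactly 2 of the 18 geometries treated there do, namely the rank-one hyperbolic geometries $\mathbb{H}_{\mathbb{R}}^4$ and $\mathbb{H}_{\mathbb{C}}^2$ (for these, the existence of a minimizer is provided by \cite{bcg2}), while the remaining 16 do not. The sole case left open by \cite{serrato} is the geometry $\mathbb{H}^2\times\mathbb{H}^2$. Hence the corollary reduces entirely to showing that this last geometry admits a minimizer.

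This is precisely what the Main Theorem gives for $n=2$. Fix any uniform torsion-free lattice $\Gamma\subset (\PSL_2(\R))^2$ (these exist by \cite{shimizu} or \cite{borel}) and set $M=\Gamma\backslash(\mathbb{H}^2\times\mathbb{H}^2)$, equipped with the product locally symmetric metric $g_0$. Theorem \ref{main} asserts that for every Riemannian metric $g$ on $M$,
\[ h^{4}(g)\,\Vol(M,g)\ \geq\ h^{4}(g_0)\,\Vol(M,g_0). \]
After rescaling $g$ to have $\Vol(M,g)=\Vol(M,g_0)$ (which is legitimate since $h^{2n}\Vol$ is scale-invariant), this reads $h(g)\geq h(g_0)$. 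Thus $g_0$, normalized to unit volume, realizes the minimal volume entropy among unit-volume metrics on $M$, so $\mathbb{H}^2\times\mathbb{H}^2$ does belong to the list of geometries admitting such a minimum.

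Putting the two inputs together, the complete list of 4-dimensional Thurston geometries admitting a metric of minimal normalized volume entropy is $\{\mathbb{H}_{\mathbb{R}}^4,\ \mathbb{H}_{\mathbb{C}}^2,\ \mathbb{H}^2\times\mathbb{H}^2\}$, exactly the hyperbolic ones. There is no genuine obstacle in this argument: the corollary is a direct bookkeeping consequence once the missing entry in Suarez-Serrato's table is supplied by Theorem \ref{main}. The only point to double-check is the scale-invariance of the quantity $h^{2n}\Vol$, which is immediate from the definitions.
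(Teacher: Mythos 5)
Your proof is correct and follows exactly the route the paper takes: Suarez-Serrato's Theorem A settles 18 of the 19 four-dimensional geometries, leaving only $\mathbb{H}^2\times\mathbb{H}^2$, which is then supplied by Theorem \ref{main} with $n=2$ (using the scale-invariance of $h^{2n}\Vol$ to pass to unit-volume metrics). The paper treats this as an immediate consequence and gives no further argument, so your slightly more explicit bookkeeping matches its intent.
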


\bibliographystyle{alpha}
\bibliography{/home/bibou/Documents/4.Recherche/6.Macros/Bibliographie/biblio}

\def\cprime{$'$} \def\dbar{\leavevmode\hbox to 0pt{\hskip.2ex \accent"16\hss}d}
\begin{thebibliography}{BCG07}

\bibitem[BCG91]{bcg1}
G.~Besson, G.~Courtois, and S.~Gallot.
\newblock Volume et entropie minimale des espaces localement sym\'etriques.
\newblock {\em Invent. Math.}, 103(2):417--445, 1991.

\bibitem[BCG95]{bcg2}
G.~Besson, G.~Courtois, and S.~Gallot.
\newblock Entropies et rigidit\'es des espaces localement sym\'etriques de
  courbure strictement n\'egative.
\newblock {\em Geom. Funct. Anal.}, 5(5):731--799, 1995.

\bibitem[BCG07]{bcgmilnorwood}
G{\'e}rard Besson, Gilles Courtois, and Sylvestre Gallot.
\newblock In\'egalit\'es de {M}ilnor-{W}ood g\'eom\'etriques.
\newblock {\em Comment. Math. Helv.}, 82(4):753--803, 2007.

\bibitem[BK08]{bucherh22}
Michelle Bucher-Karlsson.
\newblock The simplicial volume of closed manifolds covered by
  {$\mathbb{H}^2\times\mathbb{H}^2$}.
\newblock {\em J. Topol.}, 1(3):584--602, 2008.

\bibitem[BK09]{bucherpolygons}
Michelle Bucher-Karlsson.
\newblock On minimal triangulations of products of convex polygons.
\newblock {\em Discrete Comput. Geom.}, 41(2):328--347, 2009.

\bibitem[Bor63]{borel}
Armand Borel.
\newblock Compact {C}lifford-{K}lein forms of symmetric spaces.
\newblock {\em Topology}, 2:111--122, 1963.

\bibitem[CF03a]{connellfarbdegree}
Christopher Connell and Benson Farb.
\newblock The degree theorem in higher rank.
\newblock {\em J. Differential Geom.}, 65(1):19--59, 2003.

\bibitem[CF03b]{connellfarbmer}
Christopher Connell and Benson Farb.
\newblock Minimal entropy rigidity for lattices in products of rank one
  symmetric spaces.
\newblock {\em Comm. Anal. Geom.}, 11(5):1001--1026, 2003.

\bibitem[Ebe96]{eberlein}
Patrick~B. Eberlein.
\newblock {\em Geometry of nonpositively curved manifolds}.
\newblock Chicago Lectures in Mathematics. University of Chicago Press,
  Chicago, IL, 1996.

\bibitem[GHL04]{ghl}
Sylvestre Gallot, Dominique Hulin, and Jacques Lafontaine.
\newblock {\em Riemannian geometry}.
\newblock Universitext. Springer-Verlag, Berlin, third edition, 2004.

\bibitem[GJT98]{gjt}
Yves Guivarc'h, Lizhen Ji, and J.~C. Taylor.
\newblock {\em Compactifications of symmetric spaces}, volume 156 of {\em
  Progress in Mathematics}.
\newblock Birkh\"auser Boston Inc., Boston, MA, 1998.

\bibitem[Gro82]{gromovvabc}
Michael Gromov.
\newblock Volume and bounded cohomology.
\newblock {\em Inst. Hautes \'Etudes Sci. Publ. Math.}, (56):5--99 (1983),
  1982.

\bibitem[Gro83]{gromovfilling}
Mikhael Gromov.
\newblock Filling {R}iemannian manifolds.
\newblock {\em J. Differential Geom.}, 18(1):1--147, 1983.

\bibitem[Kat88]{katokconformal}
Anatole Katok.
\newblock Four applications of conformal equivalence to geometry and dynamics.
\newblock {\em Ergodic Theory Dynam. Systems}, 8$^*$(Charles Conley Memorial
  Issue):139--152, 1988.

\bibitem[Lan62]{lang}
Serge Lang.
\newblock {\em Introduction to differentiable manifolds}.
\newblock Interscience Publishers (a division of John Wiley \& Sons, Inc.), New
  York-London, 1962.

\bibitem[LS06]{lafontschmidt}
Jean-Fran{\c{c}}ois Lafont and Benjamin Schmidt.
\newblock Simplicial volume of closed locally symmetric spaces of non-compact
  type.
\newblock {\em Acta Math.}, 197(1):129--143, 2006.

\bibitem[LS09]{lohsauer}
Clara L{\"o}h and Roman Sauer.
\newblock Degree theorems and {L}ipschitz simplicial volume for nonpositively
  curved manifolds of finite volume.
\newblock {\em J. Topol.}, 2(1):193--225, 2009.

\bibitem[Man79]{manning}
Anthony Manning.
\newblock Topological entropy for geodesic flows.
\newblock {\em Ann. of Math. (2)}, 110(3):567--573, 1979.

\bibitem[Pat76]{patterson}
S.~J. Patterson.
\newblock The limit set of a {F}uchsian group.
\newblock {\em Acta Math.}, 136(3-4):241--273, 1976.

\bibitem[Shi63]{shimizu}
Hideo Shimizu.
\newblock On discontinuous groups operating on the product of the upper half
  planes.
\newblock {\em Ann. of Math. (2)}, 77:33--71, 1963.

\bibitem[SS09]{serrato}
Pablo Su{\'a}rez-Serrato.
\newblock Minimal entropy and geometric decompositions in dimension four.
\newblock {\em Algebr. Geom. Topol.}, 9(1):365--395, 2009.

\bibitem[Sto06]{stormnonuniform}
P.~A. Storm.
\newblock The minimal entropy conjecture for nonuniform rank one lattices.
\newblock {\em Geom. Funct. Anal.}, 16(4):959--980, 2006.

\end{thebibliography}

\end{document}